\def\etal{\mbox{et al.}}
\begin{document}

\markboth{M. R. Sidi Ammi and D. F. M. Torres}{Optimal Control of Nonlocal Thermistor Equations}

\title{Optimal Control of Nonlocal Thermistor Equations}

\author{Moulay Rchid Sidi Ammi$^{a}$
and Delfim F. M. Torres$^{b}$$^{\ast}$\thanks{$^\ast$Corresponding author.
Email: delfim@ua.pt\vspace{6pt}}\\\vspace{6pt}
$^{a}${\em{AMNEA Group, Department of Mathematics,
Faculty of Sciences and Technics,
Moulay Ismail University,
B.P. 509, Errachidia, Morocco}};
$^{b}${\em{Center for Research and Development in Mathematics and Applications,
Department of Mathematics, University of Aveiro,
3810-193 Aveiro, Portugal}}\\\vspace{6pt}
\received{Submitted 21-March-2012; revised 11-June-2012; accepted 13-June-2012;
for publication in \emph{Internat. J. Control}}}

\maketitle


\begin{abstract}
We are concerned with the optimal control problem
of the well known nonlocal thermistor problem, \textrm{i.e.}, in
studying the heat transfer in the resistor device whose electrical
conductivity is strongly dependent on the temperature. Existence
of an optimal control is proved. The optimality system consisting
of the state system coupled with adjoint equations is derived,
together with a characterization of the optimal control.
Uniqueness of solution to the optimality system, and therefore the
uniqueness of the optimal control, is established. The last part
is devoted to numerical simulations.\bigskip

\begin{keywords}
thermistor problem;
partial differential equations;
optimal control;
existence and uniqueness;
regularity;
optimality system.
\end{keywords}

\noindent \textbf{Mathematics Subject Classification 2010:} 49K20, 35Q93, 49J20.

\end{abstract}


\section{Introduction}
\label{sec:1}

Let $\Omega$ be a bounded domain in $\mathbb{R}^{N}$ with a
sufficiently smooth boundary $\partial \Omega$,
and let $Q_{T}=\Omega \times (0, T)$. In this work
we are interested to study an optimal control problem
to the following nonlocal parabolic boundary value problem:
\begin{equation}
\label{eq1}
\begin{gathered}
\frac{\partial u}{\partial t}- \triangle u = \frac{\lambda f(u)}{(
\int_{\Omega} f(u)\, dx)^{2}}\, ,  \mbox{ in } Q_{T}= \Omega
\times (0, T) \,, \\
\frac{\partial u}{\partial\nu} = -\beta u \, ,  \mbox{ on } S_{T}=
\partial \Omega \times (0, T) \,, \\
u(0)= u_{0} \, ,  \mbox{ in } \Omega,
\end{gathered}
\end{equation}
where $\triangle$ is the Laplacian with respect to the spacial
variables, $f$ is supposed to be a smooth function prescribed
below, and $T$ a fixed positive real. Here $\nu$ denotes the
outward unit normal and $\frac{\partial }{\partial \nu}= \nu .
\nabla$ is the normal derivative on $\partial \Omega$. Such
problems arise in many applications, for instance, in studying the
heat transfer in a resistor device whose electrical conductivity
$f$ is strongly dependent on the temperature $u$. The equation
\eqref{eq1} describes the diffusion of the temperature with the
presence of a nonlocal term. Constant $\lambda$ is a dimensionless
parameter, which can be identified with the square of the applied
potential difference at the ends of the conductor. Function
$\beta$ is the positive thermal transfer coefficient, which can
depend only in spatial variables $x$ or time $t$, but for the sake
of generality we take $\beta$ depending in both $x$ and $t$. The
given value $u_{0}$ is the initial condition for temperature. Boundary
conditions are derived from Newton cooling law, sometimes called
Robin conditions or third type boundary conditions.
In the particular case when $\beta = 0$,
we obtain an homogeneous Neumann condition or an adiabatic
condition. Other boundary conditions appear naturally, but for the
sake of simplicity we consider in this paper mixed conditions
only. Recall that under restrictive conditions, \eqref{eq1} is
obtained by reducing the elliptic-parabolic system of partial
differential equations modelling the so-called thermistor:
\begin{equation}
\label{eq12}
\begin{gathered}
u_{t}= \nabla .(k(u)\nabla u) + \sigma (u)|\nabla \varphi |^{2},\\
\nabla ( \sigma (u) \nabla \varphi )= 0,
\end{gathered}
\end{equation}
where $u$ represents the temperature generated
by the electric current flowing through a conductor,
$\varphi$ the electric potential, and $\sigma(u)$ and $k(u)$
the electric and thermal conductivities, respectively.
For more description, we refer to \citep{lac2,tza}.
A throughout discussion about the history of thermistors,
and more detailed accounts of their advantages and applications to
industry, can be found in \citep{mac,psx,kw,MR2805614}. Since the
paper of \citet{jr}, which apparently was the first who
proved the existence of weak solutions to the system \eqref{eq12},
several results were obtained. In \citep{ac}
existence and regularity of weak solutions to the
thermistor problem were established. We remember that existence and uniqueness of
solution to \eqref{eq1} under hypotheses (H1)--(H3) below
(\textrm{cf.} Sec.~\ref{sec:2}) has been established in
\citep{sidi}. For more on existence and uniqueness we refer
to \citep{ajse,MR2724193,MR2805614}.

Optimal control of problems governed by partial differential
equations is a fertile field of research and a source of many
challenging mathematical issues and interesting applications
\citep{Lions71,MR2747291,MR2583281}.
Among essential points in the theory we mention:
(i) existence, regularity, and uniqueness of the optimal
control problem; (ii) necessary optimality conditions,
which consist of the equation under consideration and an adjoint system.
Existence and regularity theory of elliptic and parabolic
equations was developed since \citep{lad}. Optimal
control theory for the system \eqref{eq12} received recently an
important increase of interest. Results for \eqref{eq1} are,
however, scarcer and underdeveloped. To the best of the author's knowledge,
known results on the optimal control of a thermistor problem
reduce to the ones of \citep{LeeShilkin}, where the term source is
taken to be the control. In \citep{gc} the problem of finding the
optimal difference of applied potential to the thermistor problem
\eqref{eq12}, in the sense of minimizing a suitable cost
functional involving the temperature, is studied. Main result of
\citep{gc} gives the optimal system in the simplest case of a
constant electric conductivity. In addition, a theorem of
existence of the optimal solution is given in the general case of
conductivities depending on the temperature. Paper
\citep{MR2405382} investigates a parabolic-elliptic system similar
to \eqref{eq12}, assuming a particular structure of the controls.
In \citep{vsv}, authors considered the optimal control of a two
dimensional steady state thermistor problem. An optimal control
problem of a two dimensional time dependent thermistor system is
considered in \citep{vol}. In \citep{sidel} a similar problem to
\eqref{eq12} is studied, consisting of nonlinear partial
differential equations resulting from the traditional modelling of
oil engineering within the framework of the mechanics of a
continuous medium. The main technique of \citep{sidel} is the
adjoint state and disturbance method to derive the necessary
optimality conditions. Recently, the authors in
\citep{MR2599927} investigated the state-constrained optimal
control of the thermistor problem with the restriction to
two-dimensional domains, while in \citep{MR2805614}
some applications to the thermistor problem, and to certain problems
of filtration of fluids in a porous medium in the presence
of the so-called Soret--Dufour effect, are given.
However, we are not aware of any work or
study about the optimal control of \eqref{eq1}.

It is known that large temperature gradients
may cause a thermistor to crack. Numerical experiments in
\citep{ffh,zw,MR2454231} show that low values of the heat transfer coefficient
$\beta$ results in small temperature variations. On the other hand,
low values of the heat transfer coefficient leads to high
operating temperatures of a thermistor, which is undesirable from
the point of view of applications. This motivates the choice of the heat
transfer coefficient as the control, and to consider the optimal
control problem of minimizing the heat transfer coefficient while
keeping the operating temperature of the thermistor not too high.


\section{Outline of the paper and Hypotheses}
\label{sec:2}

We consider an optimal control problem with
the partial differential equations \eqref{eq1}:

\smallskip

(i) The control $\beta$ belongs
to the set of admissible controls
$$
U_{M}= \left \{ \beta \in L^{\infty}(\Omega \times (0, T))\, , 0 <
m \leq \beta \leq M \right \}.
$$

(ii) The goal is to minimize  a cost functional $J(\beta)$
defined in terms of $u(\beta)$ and $\beta$ as
\begin{equation*}
J(\beta)= \int_{Q_{T}} u dx dt + \int_{S_{T}} \beta^{2} ds dt\, .
\end{equation*}

More precisely, we intend to find $\overline{\beta} \in U_{M}$ such that
\begin{equation}
\label{P}
J(\overline{\beta})= \min_{\beta \in U_{M}} J(\beta).
\end{equation}

In Section~\ref{sec:3}, existence and regularity of the optimal control are
established through a minimizing sequence argument. The energy
estimates, in an appropriate space, and then the class of
weak solutions obtained, allow us to study, in Section~\ref{sec:4},
the optimal control problem and to derive the optimality system.
The obtained necessary optimality conditions consist
of the original state parabolic equation \eqref{eq1}
coupled with the adjoint equations together with a
characterization of the optimal control. In general terms,
the approach used here is close to the method used in \citep{vol}
for investigation of the time dependent thermistor problem.
Since our objective functional depends on $u$, it is differentiated with
respect to the control. We calculate the G\^{a}teaux derivative of
$J$ with respect to $\beta$ in the direction $l$ at the minimizer control
$\beta$. We also need to differentiate $u$ with
respect to the control $\beta$. The difference quotient
$\left(u(\beta + \varepsilon l)-u(\beta)\right)/\varepsilon$
is proved to converge weakly in $H^{1}(\Omega)$ to $\psi$.
As a result, the function $\psi$ verifies a
linear PDE which gives the adjoint system, and an explicit form of
the optimal control is determined. Section~\ref{sec:5} is devoted to the
uniqueness of the solution to the optimality system, and therefore
the uniqueness of the optimal control. Finally, in Section~\ref{sec:6}
we solve the optimality system numerically for a constant case of the
optimization parameter.

In the sequel we shall assume the following assumptions:

\smallskip

(H1) $f: \mathbb{R} \rightarrow \mathbb{R}$ is a positive
Lipshitzian continuous function.

(H2) There exist positive constants $c$ and $\alpha$ such
that $c \leq f(\xi) \leq c |\xi|^{\alpha +1} +c$
for all $\xi \in \mathbb{R}$.

(H3) $u_{0} \in L^{\infty}(\Omega)$.

\bigskip

We say that $u$ is a weak solution to \eqref{eq1} if
\begin{equation}
\label{equa1}
\int_{\Omega} \frac{\partial u}{\partial t} v dx + \int_{\Omega}
\nabla u \nabla v dx + \int_{\partial \Omega} \beta u v ds
= \frac{\lambda }{( \int_{\Omega} f(u)\, dx)^{2}} \int_{\Omega} f(u) v dx \, ,
\end{equation}
for all $v \in H^{1}(\Omega)$.
We use the standard notation for Sobolev spaces. We denote
$\|\cdot\|_{L^{p}(\Omega)}= \|\cdot\|_{p}$ for each $p \in [1, \infty]$.
Along the text constants $c$ are generic, and may change at each occurrence.


\section{Existence of an optimal control}
\label{sec:3}

The proof of existence of an optimal control
(Theorem~\ref{thm:3.1}) is done using
proper estimates (Lemma~~\ref{lem32}).

\begin{theorem}
\label{thm:3.1} Assume that the assumptions (H1)--(H3) hold.
Then, there exists at least an optimal solution $\beta \in L^{\infty}(Q_{T})$
of \eqref{P}. Function $u=u(\beta)$ verifies \eqref{eq1},
in the sense of distributions, with the following regularity:
$u \in C(0, T, L^{2}(\Omega))$,
$\frac{\partial u}{\partial t} \in L^{2}(0, T, H^{-1}(\Omega))$,
$u \in L^{2}(0, T, H^{1}(\Omega))$.
\end{theorem}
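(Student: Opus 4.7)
The plan is to apply the direct method of the calculus of variations. I pick a minimizing sequence $\{\beta_n\}_{n\ge 1} \subset U_M$ with $J(\beta_n) \to \inf_{U_M} J$, and work with the associated weak states $u_n := u(\beta_n)$ produced by the well-posedness theory of \citep{sidi}. Existence will then follow from compactness of $\{\beta_n\}$ and $\{u_n\}$, passage to the limit in the weak formulation \eqref{equa1}, and lower semicontinuity of $J$.

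Since $0 < m \le \beta_n \le M$, the sequence is bounded in $L^\infty(Q_T)$, so Banach--Alaoglu yields a subsequence with $\beta_n \rightharpoonup^\ast \bar\beta$; the two-sided pointwise bounds pass to the weak-$\ast$ limit (equivalently, $U_M$ is convex and strongly closed in $L^\infty$), so $\bar\beta \in U_M$. For the states, I invoke the a priori estimates provided by Lemma~\ref{lem32}, which supply uniform bounds for $u_n$ in $L^2(0,T;H^1(\Omega)) \cap L^\infty(0,T;L^2(\Omega))$ and for $\partial_t u_n$ in $L^2(0,T;H^{-1}(\Omega))$. The Aubin--Lions compactness lemma then gives, along a further subsequence, $u_n \to \bar u$ strongly in $L^2(Q_T)$ and a.e.\ in $Q_T$, together with the corresponding weak (respectively weak-$\ast$) limits in the reflexive (dual) spaces above.

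The delicate step is passing to the limit in the nonlocal nonlinear source $\lambda f(u_n)/(\int_\Omega f(u_n)\,dx)^2$ and in the Robin boundary term $\int_{\partial\Omega} \beta_n u_n v\,ds$. Hypothesis (H2) gives $f \ge c > 0$, keeping the denominators uniformly bounded away from zero; together with continuity of $f$, the a.e.\ convergence $u_n \to \bar u$, and the growth controls from (H1)--(H2), dominated convergence identifies the nonlocal limit as $\lambda f(\bar u)/(\int_\Omega f(\bar u)\,dx)^2$. For the boundary term, the trace interpolation inequality $\|w\|_{L^2(\partial\Omega)}^2 \le C \|w\|_{L^2(\Omega)} \|w\|_{H^1(\Omega)}$ applied to $u_n - \bar u$, integrated in time and combined with the strong $L^2(Q_T)$ convergence against a uniformly bounded $L^2(0,T;H^1)$ sequence, upgrades the trace convergence to strong in $L^2(S_T)$; pairing this with $\beta_n \rightharpoonup^\ast \bar\beta$ in $L^\infty(S_T)$ then closes the limit. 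This is the step I expect to be the main obstacle, since the product of a merely weak-$\ast$ convergent factor with a merely weakly convergent one is a priori not stable.

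It remains to establish optimality of $\bar\beta$. The first summand satisfies $\int_{Q_T} u_n\,dx\,dt \to \int_{Q_T} \bar u\,dx\,dt$ by the strong $L^2$ convergence of the states, while convexity and continuity of $\beta \mapsto \int_{S_T} \beta^2\,ds\,dt$ give weak-$\ast$ lower semicontinuity of the control part of $J$. Hence $J(\bar\beta) \le \liminf_n J(\beta_n) = \inf_{U_M} J$, so $\bar\beta$ is an optimal control with associated state $u(\bar\beta) = \bar u$. The claimed regularity of $u(\bar\beta)$ is inherited from the uniform bounds, with the standard embedding $\{u \in L^2(0,T;H^1): u_t \in L^2(0,T;H^{-1})\} \hookrightarrow C([0,T];L^2(\Omega))$ supplying the asserted time-continuity.
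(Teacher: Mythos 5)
Your proposal is correct and follows essentially the same route as the paper: a minimizing sequence, the a priori estimates of Lemma~\ref{lem32}, Aubin--Lions compactness, passage to the limit in the weak formulation (splitting the Robin term into a strongly convergent trace part and a weak-$\ast$ pairing, and using $f\ge c>0$ to control the nonlocal denominator), and weak lower semicontinuity of $J$. Your use of the trace interpolation inequality to get strong $L^2(S_T)$ convergence of the traces is in fact slightly cleaner than the paper's bound via $\|u_n-u\|_{H^1(\Omega)}$, but it is a refinement of the same argument rather than a different approach.
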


\begin{proof}
Let $(\beta_{n})_{n}$ be a minimizing sequence of
$J(\beta)$ in $ U_{M}$. In other words, we have
$$
\lim_{n \rightarrow +\infty} J(\beta_{n})
= \inf_{\beta \in U_{M}} J(\beta) \, .
$$
In order to continue the proof we proceed with
the derivation of a priori estimates:

\begin{lemma}
\label{lem32}
Let $u_{n}=u(\beta_{n})$ be the corresponding solutions to the
weak formulation of \eqref{eq1}. Then
$\|u_{n}\|^{2}_{L^{2}(0, T, H^{1}(\Omega))} + \|u_{n}\|_{2}^{2} \leq c$,
where $c$ is a constant independent of $n$.
\end{lemma}

\begin{proof}
Multiplying the corresponding equations of
\eqref{eq1} by $u_{n}$ and using the fact that
$u_{n} \in L^{\infty}(\Omega)$, we obtain
\begin{equation*}
\begin{split}
\frac{1}{2} \frac{\partial}{\partial t} \|u_{n}\|_{2}^{2}+
\int_{\Omega} |\nabla u_{n}|^{2} dx +  \int_{\partial \Omega} \beta_{n} u_{n}^{2} ds &
\leq c \int_{\Omega}  |f(u_{n})u_{n}| dx \\
& \leq c \int_{\Omega}  \left ( |u_{n}|^{\alpha + 1} + c \right) |u_{n}| dx \\
& \leq c \|u_{n}\|_{1}\\
& \leq c \|u_{n}\|_{2}\\
& \leq c \|u_{n}\|_{H^{1}(\Omega)}.
\end{split}
\end{equation*}
Using the fact that $0 < m \leq \beta_{n}$, we have
\begin{equation}
\label{equation22} \frac{1}{2} \frac{\partial}{\partial t}
\|u_{n}\|_{2}^{2}+ \int_{\Omega} |\nabla u_{n}|^{2} dx + m
\int_{\partial \Omega} u_{n}^{2} ds \leq c \|u_{n}\|_{H^{1}(\Omega)}.
\end{equation}
Now denote
\begin{equation}
\label{eq:ref:ak:1}
\|v\|^{2}_{*}= \int_{\Omega} |\nabla v|^{2} dx +  m \int_{\partial \Omega}
v^{2} ds.
\end{equation}
It is well known that $\|v\|_{*}$ defines a
norm on $H^{1}(\Omega)$ which is equivalent to the
$\|\cdot\|_{H^{1}(\Omega)}$ norm \citep{zeidler}.
Then, there exists a constant $\mu > 0$ such that
\begin{equation}
\label{eq:ref:ak:2}
\mu \|u_{n}\|^{2}_{H^{1}(\Omega)} \leq \|u_{n}\|^{2}_{*} \leq c
\|u_{n}\|^{2}_{H^{1}(\Omega)}.
\end{equation}
It follows from \eqref{equation22}--\eqref{eq:ref:ak:2} that
\begin{equation*}
\frac{1}{2} \frac{\partial}{\partial t} \|u_{n}\|_{2}^{2}+ \mu
\|u_{n}\|^{2}_{H^{1}(\Omega)} \leq  \frac{1}{2}
\frac{\partial}{\partial t} \|u_{n}\|_{2}^{2}
+ \|u_{n}\|^{2}_{*} \leq c \|u_{n}\|_{H^{1}(\Omega)}
\leq \frac{\mu}{2} \|u_{n}\|^{2}_{H^{1}(\Omega)} + c.
\end{equation*}
We obtain
$\|u_{n}(t)\|_{2}^{2}+ \mu \|u_{n}\|^{2}_{L^{2}(0, T, H^{1}(\Omega))} \leq c$
integrating over $(0, T)$.
\end{proof}

We now continue the proof of Theorem~\ref{thm:3.1}.
By Lemma~\ref{lem32} we have, for all $n$, that
$$
u_{n} \in L^{\infty}(0, T, L^{2}(\Omega)) \bigcap L^{2}(0, T,
H^{1}(\Omega)).
$$
Therefore, from \eqref{eq1}, $\frac{\partial u_{n}}{\partial t}$
is bounded in $L^{2}(0, T, H^{-1}(\Omega))$. Using compacity
arguments of Lions \citep{jll} and Aubin's lemma, we have
that $(u_{n})$ is compact in $L^{2}(Q_{T})$. Hence we can extract
from $(u_{n})$ a subsequence, not relabeled, and there exists
$\beta \in U_{M}$ such that
\begin{equation}
\label{eqlimits}
\begin{gathered}
u_{n} \rightarrow u \mbox{ weakly in } L^{2}(0, T,
H^{1}(\Omega)),\\
\frac{\partial u_{n}}{\partial t} \rightarrow \frac{\partial
u_{n}}{\partial t}  \mbox{ weakly in } L^{2}(0, T,
H^{-1}(\Omega)),\\
u_{n} \rightarrow u \mbox{ strongly in } L^{2}(Q_{T}),\\
u_{n} \rightarrow u \mbox{ a.e.  in }L^{2}(Q_{T}),\\
\beta_{n} \rightarrow \beta \mbox{ weakly in } L^{2}(\partial \Omega),\\
\beta_{n} \rightarrow \beta \mbox{ weakly star in } L^{\infty}(\partial \Omega).
\end{gathered}
\end{equation}
Our task consists now to prove that $u=u(\beta)$ is a weak solution of
\eqref{eq1} with control $\beta$. From the weak formulation of $u_{n}$ we have
 $$ \int_{\Omega} \frac{\partial
u_{n}}{\partial t} v dx + \int_{\Omega} \nabla u_{n} \nabla v dx +
\int_{\partial \Omega} \beta_{n} u_{n} v ds = \frac{\lambda }{(
\int_{\Omega} f(u_{n})\, dx )^{2}} \int_{\Omega} f(u_{n}) v dx.
$$
We first show that for any test function $v \in H^{1}(\Omega)$ and
$n \rightarrow \infty$ we have
$$
\int_{\partial \Omega} \beta_{n} u_{n} v ds  \rightarrow
\int_{\partial \Omega} \beta u v ds.
$$
Indeed,
\begin{equation}
\label{eqbound}
\begin{split}
\left |\int_{\partial \Omega} \beta_{n} u_{n} v ds
- \int_{\partial \Omega} \beta u v ds \right | &   \leq  \left |
\int_{\partial \Omega} ( \beta_{n} u_{n} v - \beta_{n} u v ) ds
\right | + \left | \int_{\partial \Omega}
( \beta_{n} u v - \beta u v ) ds  \right |\\
& \leq  M   \int_{\partial \Omega} |u_{n}-
u| |v| ds + \left |  \int_{\partial \Omega} (\beta_{n}- \beta) uv
ds \right | \\
& \leq M \|u_{n}-u\|_{L^{2}(\partial \Omega)}
\|v\|_{L^{2}(\partial \Omega)} |+ \left |  \int_{\partial \Omega}
(\beta_{n}- \beta) uv ds \right | \\
& \leq M \|u_{n}-u\|_{H^{1}(\Omega)}
\|v\|_{L^{2}(\partial \Omega)} |+ \left |  \int_{\partial \Omega}
(\beta_{n}- \beta) uv ds \right |,
\end{split}
\end{equation}
where we used here the trace inequality $\|u\|_{L^{2}(\partial
\Omega)} \leq c \|u\|_{H^{1}(\Omega)}$, which gives that
$u \in H^{1}(\Omega)$ implies $u \in L^{2}(\partial \Omega)$.
It is obvious from limits \eqref{eqlimits} that the right hand
side of the above inequality \eqref{eqbound} goes to $0$ when $n
\rightarrow \infty$.
On the other hand, we have $u_{n} \rightarrow u$ a.e. in
$\Omega \times (0, T)$. Since $f$ is continuous,
$f(u_{n}) \rightarrow f(u) \, a.e. \mbox{ in } L^{2}(\Omega)$.
It follows that
\begin{equation*}
\int_{\Omega} f(u_{n}) dx \rightarrow \int_{\Omega} f(u)
dx,
\end{equation*}
and
\begin{equation*}
\int_{\Omega} f(u_{n})v dx \rightarrow \int_{\Omega} f(u) v dx, \,
\, \forall v \in H^{1}(\Omega).
\end{equation*}
We conclude that $u=u(\beta)$ is a weak solution of \eqref{eq1}.
Using the fact that $J(\beta)$ is weak lower semicontinuous
with respect to the $L^{2}$ norm, it follows that the
infimum is achieved at $\beta$.
\end{proof}


\section{Characterization of the optimal control}
\label{sec:4}

To study the optimal control we derive an optimality system consisting of
equation \eqref{eq1} coupled with an adjoint system. Then, in order to obtain necessary
conditions for the optimality system, we differentiate the cost functional and the
temperature $u$ with respect to the control $\beta$.
Here, besides (H1)--(H3), we further suppose that

\medskip

(H4) $f$ is of class $C^{1}$.

\begin{theorem}
\label{thm31}
Assume hypotheses (H1)--(H4).
Then $\beta \mapsto u(\beta)$ is
differentiable in the sense that as $\varepsilon \rightarrow 0$
$$
\frac{u(\beta + \varepsilon l)-u(\beta)}{\varepsilon}\rightarrow
\psi \mbox{ weakly in } H^{1}(\Omega),
$$
for any $\beta, l \in U_{M}$ such that $(\beta + \varepsilon l)\in
U_{M}$ for small $\varepsilon$. Moreover, $\psi$ verifies
\begin{equation}
\label{sys}
\begin{gathered}
\frac{\partial \psi}{\partial t} - \triangle \psi =
\frac{-2\lambda f(u)}{( \int_{\Omega} f(u)\, dx )^{3}}
\int_{\Omega}f'(u) \psi dx+ \frac{\lambda f'(u) \psi }{(
\int_{\Omega} f(u)\, dx )^{2}}
\quad \mbox{ in } \Omega, \\
\frac{\partial \psi}{\partial \nu}+ \beta \psi + l u = 0 \mbox{ on
} \partial \Omega.
\end{gathered}
\end{equation}
\end{theorem}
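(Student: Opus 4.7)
The plan is to introduce the difference quotient
$\psi_{\varepsilon} = (u(\beta + \varepsilon l) - u(\beta))/\varepsilon$,
derive the linear (nonlocal, nonautonomous) PDE that it satisfies by subtracting the weak formulations for $u_{\varepsilon}:=u(\beta+\varepsilon l)$ and $u := u(\beta)$ and dividing by $\varepsilon$, then establish $\varepsilon$-uniform $H^{1}$ bounds, extract a weak limit, identify it with the unique weak solution of \eqref{sys}, and conclude by a uniqueness argument that the whole family converges (not merely a subsequence). Concretely, subtracting the two weak formulations I obtain, for every test function $v \in H^{1}(\Omega)$,
\begin{equation*}
\int_{\Omega} \frac{\partial \psi_{\varepsilon}}{\partial t} v \, dx
+ \int_{\Omega} \nabla \psi_{\varepsilon} \cdot \nabla v \, dx
+ \int_{\partial \Omega} \beta \psi_{\varepsilon} v \, ds
+ \int_{\partial \Omega} l \, u_{\varepsilon} v \, ds
= \mathcal{R}_{\varepsilon}(v),
\end{equation*}
where $\mathcal{R}_{\varepsilon}(v)$ is $\varepsilon^{-1}$ times the difference of the two nonlocal source terms. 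The boundary term $\int_{\partial\Omega} l\, u_{\varepsilon} v\, ds$ already foreshadows the Robin condition $\partial_{\nu}\psi + \beta\psi + lu = 0$ in \eqref{sys}.

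For the a priori bound I would take $v = \psi_{\varepsilon}$ and repeat the computation used for Lemma~\ref{lem32}: the equivalent norm $\|\cdot\|_{*}$ from \eqref{eq:ref:ak:1}--\eqref{eq:ref:ak:2} controls the $H^{1}(\Omega)$ norm via the lower bound $\beta \ge m > 0$, while the boundary forcing $\int_{\partial\Omega} l u_{\varepsilon} \psi_{\varepsilon}\, ds$ is absorbed using $|l|\le 2M$, the trace inequality, and Young's inequality together with the $L^{\infty}(0,T;L^{2}(\Omega)) \cap L^{2}(0,T;H^{1}(\Omega))$ bound on $u_{\varepsilon}$ already furnished by Lemma~\ref{lem32} (uniform in $\varepsilon$, since $\beta+\varepsilon l \in U_{M}$). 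The nonlocal right-hand side contribution requires a bit more care, but hypothesis (H2) plus (H4) ensures $f'$ is locally bounded on the range of $u$ and $u_{\varepsilon}$, and the denominator $\int_{\Omega} f(u)\, dx$ is bounded below by $c|\Omega|>0$ by (H2); this gives uniform bounds on $\|\psi_{\varepsilon}\|_{L^{2}(0,T;H^{1}(\Omega))}$ and $\|\partial_{t}\psi_{\varepsilon}\|_{L^{2}(0,T;H^{-1}(\Omega))}$. Aubin--Lions then yields a subsequence $\psi_{\varepsilon}\rightharpoonup \psi$ weakly in $L^{2}(0,T;H^{1}(\Omega))$, strongly in $L^{2}(Q_{T})$, and a.e.

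The main obstacle, and the decisive step, is passing to the limit in the nonlocal source quotient $\mathcal{R}_{\varepsilon}(v)$. I would rearrange
\begin{equation*}
\frac{1}{\varepsilon}\!\left[\frac{\int_{\Omega}\! f(u_{\varepsilon}) v\, dx}{(\int_{\Omega}\! f(u_{\varepsilon})\, dx)^{2}}
-\frac{\int_{\Omega}\! f(u) v\, dx}{(\int_{\Omega}\! f(u)\, dx)^{2}}\right]
=\frac{\int_{\Omega}\! \frac{f(u_{\varepsilon})-f(u)}{\varepsilon} v\, dx}{(\int_{\Omega}\! f(u_{\varepsilon})\, dx)^{2}}
-\int_{\Omega}\! f(u) v\, dx \cdot \frac{A_{\varepsilon}+A}{A_{\varepsilon}^{2} A^{2}}\cdot \frac{A_{\varepsilon}-A}{\varepsilon},
\end{equation*}
with $A_{\varepsilon}=\int_{\Omega} f(u_{\varepsilon})\, dx$ and $A=\int_{\Omega} f(u)\, dx$. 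Writing $f(u_{\varepsilon})-f(u)=\bigl(\int_{0}^{1} f'(u+s(u_{\varepsilon}-u))\, ds\bigr)(u_{\varepsilon}-u)=\varepsilon\, g_{\varepsilon}\psi_{\varepsilon}$ with $g_{\varepsilon}\to f'(u)$ a.e.\ and uniformly bounded (by (H4) and the $L^{\infty}$ control inherited from the regularity of $u$), and using the strong $L^{2}$ convergence of $\psi_{\varepsilon}$ to $\psi$, I can pass to the limit to recover exactly $\lambda f'(u)\psi/A^{2}-2\lambda f(u) (\int_{\Omega} f'(u)\psi\, dx)/A^{3}$, as required in \eqref{sys}. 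The boundary term $\int_{\partial\Omega} l u_{\varepsilon} v\, ds \to \int_{\partial\Omega} l u v\, ds$ poses no difficulty thanks to the trace compactness. Finally, since the limit problem \eqref{sys} is linear with Lipschitz coefficients, uniqueness of $\psi$ upgrades subsequential convergence to convergence of the full family $\psi_{\varepsilon} \rightharpoonup \psi$ in $H^{1}(\Omega)$, which completes the proof.
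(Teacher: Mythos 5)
Your proposal follows essentially the same route as the paper: an $\varepsilon$-uniform $H^{1}$ energy estimate on the difference quotient obtained by testing against $\psi_{\varepsilon}$ (the paper's Lemma~\ref{lem33}), extraction of weak limits, and the identical decomposition of the nonlocal source quotient into the two pieces $I$ and $II$ whose limits produce the right-hand side of \eqref{sys}. Your only additions --- the explicit integral mean-value representation $f(u_{\varepsilon})-f(u)=\varepsilon g_{\varepsilon}\psi_{\varepsilon}$ and the closing uniqueness argument upgrading subsequential to full-family convergence --- are refinements of steps the paper leaves implicit.
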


The proof of Theorem~\ref{thm31} passes by several steps.


\subsection{A priori estimates and convergence}

Denote $u=u(\beta)$ and $u_{\varepsilon}=u(\beta_{\varepsilon})$,
where $\beta_{\varepsilon}= \beta + \varepsilon l$. Before the
derivation of the optimality system, we need to establish an
$H^{1}$ norm estimate of $\frac{u_{\varepsilon}-u}{\varepsilon}$.

\begin{lemma}
\label{lem33}
We have
$$
\left\|\frac{u_{\varepsilon}-u}{\varepsilon}\right\|_{2}^{2}
+ \left\|\frac{u_{\varepsilon}-u}{\varepsilon}\right\|^{2}_{L^{2}(0, T,H^{1}(\Omega))}
\leq c.
$$
\end{lemma}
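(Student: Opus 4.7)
The plan is to derive a PDE for the difference quotient $w_\varepsilon := (u_\varepsilon - u)/\varepsilon$ and then run an energy estimate parallel to that of Lemma~\ref{lem32}. Subtracting the weak formulation \eqref{equa1} for $u$ (with control $\beta$) from the one for $u_\varepsilon$ (with control $\beta_\varepsilon = \beta + \varepsilon l$) and dividing by $\varepsilon$, the boundary term rearranges via $\beta_\varepsilon u_\varepsilon - \beta u = \beta(u_\varepsilon - u) + \varepsilon l u_\varepsilon$, and one obtains, for every $v \in H^1(\Omega)$,
\begin{equation*}
\int_\Omega \frac{\partial w_\varepsilon}{\partial t}\, v\,dx + \int_\Omega \nabla w_\varepsilon \cdot \nabla v\, dx + \int_{\partial\Omega} \beta w_\varepsilon v\, ds + \int_{\partial\Omega} l\, u_\varepsilon v\, ds = R_\varepsilon(v),
\end{equation*}
where $R_\varepsilon(v)$ is $1/\varepsilon$ times the difference of the two nonlocal right-hand sides.

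The second step is to rewrite $R_\varepsilon$ in a form amenable to absorption. Setting $A_\varepsilon = \int_\Omega f(u_\varepsilon) v\,dx$, $B_\varepsilon = \int_\Omega f(u_\varepsilon)\,dx$, and $A$, $B$ analogously, the algebraic identity
\begin{equation*}
\frac{A_\varepsilon}{B_\varepsilon^{2}} - \frac{A}{B^{2}} = \frac{A_\varepsilon - A}{B_\varepsilon^{2}} - A \, \frac{(B_\varepsilon - B)(B_\varepsilon + B)}{B_\varepsilon^{2} B^{2}}
\end{equation*}
together with the Lipschitz bound in (H1) implies that each factor of the form $(A_\varepsilon - A)/\varepsilon$ and $(B_\varepsilon - B)/\varepsilon$ is pointwise dominated by $L|w_\varepsilon|$ (multiplied by $|v|$ or $1$). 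The uniform lower bound $f \geq c > 0$ from (H2) keeps $B$, $B_\varepsilon$ away from zero, while the $L^\infty \cap L^2(0,T;H^1)$ bounds on $u$ and $u_\varepsilon$ control the remaining factors. Altogether $|R_\varepsilon(v)| \leq c\,\|w_\varepsilon\|_{2}(1 + \|v\|_{2})$.

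Now I would choose $v = w_\varepsilon$ as a test function. The dissipative part $\int_\Omega |\nabla w_\varepsilon|^2 + m\int_{\partial\Omega} w_\varepsilon^2$ coincides with $\|w_\varepsilon\|_{*}^{2}$, equivalent to $\|w_\varepsilon\|_{H^{1}(\Omega)}^{2}$ by \eqref{eq:ref:ak:1}--\eqref{eq:ref:ak:2}. The cross-term is controlled by the trace inequality and Young's inequality,
\begin{equation*}
\left|\int_{\partial\Omega} l\, u_\varepsilon w_\varepsilon\, ds \right| \leq M \|u_\varepsilon\|_{L^{2}(\partial\Omega)} \|w_\varepsilon\|_{L^{2}(\partial\Omega)} \leq \tfrac{\mu}{4}\|w_\varepsilon\|_{H^{1}(\Omega)}^{2} + c\,\|u_\varepsilon\|_{H^{1}(\Omega)}^{2},
\end{equation*}
and $R_\varepsilon(w_\varepsilon)$ is handled similarly. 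After absorbing the $H^1$-terms into the left-hand side, the differential inequality
\begin{equation*}
\frac{d}{dt} \|w_\varepsilon\|_{2}^{2} + \mu\, \|w_\varepsilon\|_{H^{1}(\Omega)}^{2} \leq c\, \|w_\varepsilon\|_{2}^{2} + c\,\|u_\varepsilon\|_{H^{1}(\Omega)}^{2}
\end{equation*}
follows. Since Lemma~\ref{lem32} bounds $\|u_\varepsilon\|_{L^{2}(0,T;H^{1}(\Omega))}$ uniformly in $\varepsilon$ (and $w_\varepsilon(0) = 0$), Gronwall's inequality and integration over $(0,T)$ yield the stated estimate.

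The delicate point is the second step: one must confirm that division by $\varepsilon$ in the nonlocal term produces no loss, i.e. that the combined quotient stays of order $\|w_\varepsilon\|_{2}$ uniformly as $\varepsilon \to 0$. This is where (H1), (H2) and the strict positivity $\int_\Omega f(u)\,dx \geq c|\Omega| > 0$ are all used in an essential way. Everything else is a direct adaptation of the energy argument already carried out in Lemma~\ref{lem32}, with the additional boundary term $\int_{\partial\Omega} l\, u_\varepsilon w_\varepsilon\,ds$ handled by the a priori $H^1$ bound on $u_\varepsilon$.
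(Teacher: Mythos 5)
Your proposal is correct and follows essentially the same route as the paper: the same decomposition of the nonlocal difference quotient into a Lipschitz term plus $f(u)$ times a difference of reciprocal squares, the same energy estimate with $w_\varepsilon$ as test function, the trace inequality plus Young to absorb the boundary term $\int_{\partial\Omega} l\,u_\varepsilon w_\varepsilon\,ds$, the equivalence of $\|\cdot\|_{*}$ with $\|\cdot\|_{H^{1}(\Omega)}$, and integration in time. Your version is marginally more careful in keeping $\|u_\varepsilon\|_{H^{1}(\Omega)}^{2}$ as a time-integrable quantity and in invoking Gronwall explicitly, but this is a refinement of the paper's argument, not a different one.
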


\begin{proof}
Subtracting equation \eqref{eq1} from the corresponding equation
of $u_{\varepsilon}$, we have
\begin{equation}
\label{eq4}
\frac{\partial}{\partial t}\left(\frac{u_{\varepsilon}-u}{\varepsilon}\right)
- \triangle\left(\frac{u_{\varepsilon}-u}{\varepsilon}\right)
=\frac{\lambda}{\epsilon} \frac{\left (f(u_{\varepsilon})
-f(u)\right )}{( \int_{\Omega} f(u_{\varepsilon})\, dx)^{2}}
+ \frac{\lambda}{\epsilon} f(u)\left( \frac{1}{( \int_{\Omega}
f(u_{\varepsilon})\, dx )^{2}}
- \frac{1}{( \int_{\Omega} f(u)\, dx)^{2}}\right).
\end{equation}
Multiplying the equation \eqref{eq4} by
$\frac{u_{\varepsilon}-u}{\varepsilon}$, we obtain that
\begin{equation*}
\begin{split}
\frac{1}{2}&\frac{\partial}{\partial t}
\left\|\frac{u_{\varepsilon}-u}{\varepsilon}\right\|_{2}^{2}
+\int_{\Omega} \left|\nabla (\frac{u_{\varepsilon}-u}{\varepsilon})\right|^{2} \, dx
-  \int_{\partial \Omega} \nabla \left(\frac{u_{\varepsilon}-u}{\varepsilon}\right).
\left(\frac{u_{\varepsilon}-u}{\varepsilon}\right) \nu \, ds\\
&= \frac{1}{2}\frac{\partial}{\partial t}
\left\|\frac{u_{\varepsilon}-u}{\varepsilon}\right\|_{2}^{2}
+ \int_{\Omega} \left|\nabla (\frac{u_{\varepsilon}-u}{\varepsilon})\right|^{2} \, dx
- \int_{\partial \Omega}\frac{\partial}{\partial \nu}
\left(\frac{u_{\varepsilon}-u}{\varepsilon}\right)
\left(\frac{u_{\varepsilon}-u}{\varepsilon}\right)  \, ds\\
&= \frac{1}{2}\frac{\partial}{\partial t}
\left\|\frac{u_{\varepsilon}-u}{\varepsilon}\right\|_{2}^{2}
+ \int_{\Omega} \left|\nabla (\frac{u_{\varepsilon}-u}{\varepsilon})\right|^{2} \, dx
+ \int_{\partial \Omega} \beta \left(\frac{u_{\varepsilon}-u}{\varepsilon}\right)^{2} \, ds
+ \int_{\partial \Omega} l u_{\varepsilon}\left(\frac{u_{\varepsilon}-u}{\varepsilon}\right) \, ds\\
&\leq \frac{\lambda}{( \int_{\Omega} f(u)\, dx )^{2}} \left\langle
\frac{f(u_{\varepsilon})- f(u)}{\varepsilon},
\frac{u_{\varepsilon}-u}{\varepsilon} \right\rangle\\
&\qquad + \left\langle \lambda \frac{f(u)}{\varepsilon}
\left( \frac{1}{( \int_{\Omega}
f(u_{\varepsilon})\, dx )^{2}}- \frac{1}{( \int_{\Omega} f(u)\, dx)^{2}}\right),
\frac{u_{\varepsilon}-u}{\varepsilon} \right\rangle \, .
\end{split}
\end{equation*}
Since $0 < m \leq \beta$, we get
\begin{equation*}
\begin{split}
\frac{1}{2}&\frac{\partial}{\partial t}
\left\|\frac{u_{\varepsilon}-u}{\varepsilon}\right\|_{2}^{2}
+ \int_{\Omega} \left|\nabla
\left(\frac{u_{\varepsilon}-u}{\varepsilon}\right)\right|^{2} \, dx
-  \int_{\partial \Omega} \nabla \left(\frac{u_{\varepsilon}-u}{\varepsilon}\right)
\left(\frac{u_{\varepsilon}-u}{\varepsilon}\right) \nu \, ds\\
&\leq \frac{1}{2}\frac{\partial}{\partial t}
\left\|\frac{u_{\varepsilon}-u}{\varepsilon}\right\|_{2}^{2}
+ \int_{\Omega} \left|\nabla
\left(\frac{u_{\varepsilon}-u}{\varepsilon}\right)\right|^{2} \, dx
+ m \int_{\partial \Omega} \left(\frac{u_{\varepsilon}-u}{\varepsilon}\right)^{2} \, ds
+ \int_{\partial \Omega}
l u_{\varepsilon}\left(\frac{u_{\varepsilon}-u}{\varepsilon}\right) \, ds\\
&\leq \frac{\lambda}{( \int_{\Omega} f(u)\, dx )^{2}}
\left\langle \frac{f(u_{\varepsilon})- f(u)}{\varepsilon},
\frac{u_{\varepsilon}-u}{\varepsilon} \right\rangle\\
&\qquad + \left\langle \lambda \frac{f(u)}{\varepsilon}
\left( \frac{1}{( \int_{\Omega} f(u_{\varepsilon})\, dx )^{2}}
- \frac{1}{( \int_{\Omega} f(u)\, dx)^{2}}\right),
\frac{u_{\varepsilon}-u}{\varepsilon} \right\rangle \, .
\end{split}
\end{equation*}
Using the fact that $f$ is Lipschitzian, it follows from the
$L^{\infty}$ boundedness of $u$ and $u_{\varepsilon}$ that
\begin{equation*}
\begin{split}
\frac{1}{2}&\frac{\partial}{\partial t}
\left\|\frac{u_{\varepsilon}-u}{\varepsilon}\right\|_{2}^{2}
+ \int_{\Omega} \left|\nabla (\frac{u_{\varepsilon}-u}{\varepsilon})\right|^{2} \, dx
+ m \int_{\partial \Omega}\left(\frac{u_{\varepsilon}-u}{\varepsilon}\right)^{2} \, ds
+ \int_{\partial \Omega}
l u_{\varepsilon}\left(\frac{u_{\varepsilon}-u}{\varepsilon}\right) \, ds  \\
&\leq  c \int_{\Omega}\left(\frac{u_{\varepsilon}-u}{\varepsilon}\right)^{2} \, dx
+ \frac{\lambda}{\varepsilon} \frac{( \int_{\Omega} f(u)\, dx)^{2}
-\left( \int_{\Omega} f(u_{\varepsilon})\, dx \right)^{2}}{(\int_{\Omega}
f(u_{\varepsilon})\, dx )^{2}
\left( \int_{\Omega} f(u)\, dx \right)^{2}}
\left\langle f(u), \frac{u_{\varepsilon}-u}{\varepsilon} \right\rangle \\
&\leq c  \left\|\frac{u_{\varepsilon}-u}{\varepsilon}\right\|_{2}^{2}
+ \frac{c}{\varepsilon} \left(\int_{\Omega}(f(u)-f(u_{\varepsilon}))\, dx \right)
\left(\int_{\Omega}(f(u)+f(u_{\varepsilon}))\, dx \right)
\left\|\frac{u_{\varepsilon}-u}{\varepsilon}\right\|_{1} \\
&\leq c \left\|\frac{u_{\varepsilon}-u}{\varepsilon}\right\|_{2}^{2}
+ c \left\|\frac{u_{\varepsilon}-u}{\varepsilon}\right\|_{1}
\int_{\Omega}\left(\frac{u_{\varepsilon}-u}{\varepsilon}\right)\, dx
\leq c \left\|\frac{u_{\varepsilon}-u}{\varepsilon}\right\|_{2}^{2}
+ c \left\|\frac{u_{\varepsilon}-u}{\varepsilon}\right\|_{1}^{2} .
\end{split}
\end{equation*}
Since $L^{2}(\Omega) \subseteq L^{1}(\Omega)$, then
\begin{multline*}
\frac{1}{2}\frac{\partial}{\partial t}
\left\|\frac{u_{\varepsilon}-u}{\varepsilon}\right\|_{2}^{2}
+ \int_{\Omega} \left|\nabla
\left(\frac{u_{\varepsilon}-u}{\varepsilon}\right)\right|^{2} \, dx
+ m \int_{\partial \Omega} \left(\frac{u_{\varepsilon}-u}{\varepsilon}\right)^{2} \, ds
+ \int_{\partial \Omega}
l u_{\varepsilon}\left(\frac{u_{\varepsilon}-u}{\varepsilon}\right) \, ds\\
\leq c \left\|\frac{u_{\varepsilon}-u}{\varepsilon}\right\|_{2}^{2}.
\end{multline*}
Using the trace inequality
$\|u_{\varepsilon}\|_{L^{2}(\partial \Omega )}
\leq c \|u_{\varepsilon}\|_{H^{1}(\Omega )}$,  we have
\begin{equation*}
\begin{split}
\frac{1}{2}\frac{\partial}{\partial t} &
\left\|\frac{u_{\varepsilon}-u}{\varepsilon}\right\|_{2}^{2}
+ \int_{\Omega}
\left|\nabla \left(\frac{u_{\varepsilon}-u}{\varepsilon}\right)\right|^{2} \, dx
+ m \int_{\partial \Omega} \left(\frac{u_{\varepsilon}-u}{\varepsilon}\right)^{2} \, ds\\
&\leq \int_{\partial \Omega} |l|
\left|u_{\varepsilon}\right| \left|(\frac{u_{\varepsilon}-u}{\varepsilon})\right| ds +  c
\left\|\frac{u_{\varepsilon}-u}{\varepsilon}\right\|_{2}^{2}\\
& \leq c \left\|u_{\varepsilon}\right\|_{L^{2}(\partial \Omega)}
\left\|\frac{u_{\varepsilon}-u}{\varepsilon}\right\|_{L^{2}(\partial
\Omega)} +  c
\left\|\frac{u_{\varepsilon}-u}{\varepsilon}\right\|_{2}^{2}\\
& \leq c \left\|u_{\varepsilon}\right\|_{H^{1}(\Omega)}
\left\|\frac{u_{\varepsilon}-u}{\varepsilon}\right\|_{H^{1}(\Omega)}
+  c \left\|\frac{u_{\varepsilon}-u}{\varepsilon}\right\|_{2}^{2}.
\end{split}
\end{equation*}
Thus,
\begin{equation*}
\frac{1}{2}\frac{\partial}{\partial t}
\left\|\frac{u_{\varepsilon}-u}{\varepsilon}\right\|_{2}^{2}
+\left\|\frac{u_{\varepsilon}-u}{\varepsilon}\right\|^{2}_{*}
\leq c \left\|u_{\varepsilon}\right\|_{H^{1}(\Omega)}
\left\|\frac{u_{\varepsilon}-u}{\varepsilon}\right\|_{H^{1}(\Omega)}
+  c \left\|\frac{u_{\varepsilon}-u}{\varepsilon}\right\|_{2}^{2}.
\end{equation*}
On the other hand, by the equivalence of $\|\cdot\|_{*}$
and $\|\cdot\|_{H^{1}}$, we have for a positive constant $c$ that
\begin{equation*}
c \left\|\frac{u_{\varepsilon}-u}{\varepsilon}\right\|^{2}_{H^{1}(\Omega)}
\leq \left\|\frac{u_{\varepsilon}-u}{\varepsilon}\right\|^{2}_{*}\, .
\end{equation*}
It follows from Young's inequality that
\begin{equation*}
\begin{split}
\frac{1}{2}\frac{\partial}{\partial t}
\left\|\frac{u_{\varepsilon}-u}{\varepsilon}\right\|_{2}^{2}
+ c \left\|\frac{u_{\varepsilon}-u}{\varepsilon}\right\|^{2}_{H^{1}(\Omega)}
& \leq c \left\|u_{\varepsilon}\right\|_{H^{1}(\Omega)}
\left\|\frac{u_{\varepsilon}-u}{\varepsilon}\right\|_{H^{1}(\Omega)}
+  c \left\|\frac{u_{\varepsilon}-u}{\varepsilon}\right\|_{2}^{2}\\
& \leq c \left\|\frac{u_{\varepsilon}-u}{\varepsilon}\right\|_{H^{1}(\Omega)}
+ c \left\|\frac{u_{\varepsilon}-u}{\varepsilon}\right\|_{2}^{2}\\
& \leq \frac{c}{2} \left\|\frac{u_{\varepsilon}-u}{\varepsilon}\right\|_{H^{1}(\Omega)}^{2}
+ c \left\|\frac{u_{\varepsilon}-u}{\varepsilon}\right\|_{2}^{2} + c .
\end{split}
\end{equation*}
Therefore,
\begin{equation*}
\frac{\partial}{\partial t}
\left\|\frac{u_{\varepsilon}-u}{\varepsilon}\right\|_{2}^{2}
+ c \left\|\frac{u_{\varepsilon}-u}{\varepsilon}\right\|^{2}_{H^{1}(\Omega)}
\leq c.
\end{equation*}
We get the intended result of Lemma~\ref{lem33}
integrating this inequality with respect to time.
\end{proof}

Using the energy estimates of Lemma~\ref{lem33} we have,
up to a subsequence of $\varepsilon \rightarrow 0$, that there exists $\psi$ such that
\begin{equation}
\label{eqconverge}
\begin{gathered}
\frac{u_{\varepsilon}-u}{\varepsilon} \rightarrow \psi \mbox{
weakly in } L^{\infty}(0, T, L^{2}(\Omega)),\\
\frac{u_{\varepsilon}-u}{\varepsilon} \rightarrow \psi \mbox{
weakly in } L^{2}(0, T, H^{1}(\Omega)),\\
\frac{\partial}{\partial t}\left(\frac{u_{\varepsilon}-u}{\varepsilon}\right)
\rightarrow \frac{\partial \psi}{\partial t} \mbox{ weakly in } L^{2}(0, T, H^{-1}\Omega)),\\
\frac{u_{\varepsilon}-u}{\varepsilon} \rightarrow \psi \mbox{
weakly in } L^{\infty}(0, T, L^{2}(\partial \Omega)),\\
\beta_{\varepsilon} \rightarrow \beta \mbox{ weakly in }
L^{2}(\partial \Omega) \mbox{ as } \varepsilon \rightarrow 0,\\
\beta_{\varepsilon} \rightarrow \beta \mbox{ weakly in }
L^{\infty}(\Omega) \mbox{ as } \varepsilon \rightarrow 0.
\end{gathered}
\end{equation}


\subsection{Proof of Theorem~\ref{thm31}}

We are now ready to derive system \eqref{sys}. We have
\begin{equation}
\label{eq5}
\begin{split}
\int_{\Omega} & \frac{\partial}{\partial t}
\left(\frac{u_{\varepsilon}-u}{\varepsilon}\right) v dx
+ \int_{\Omega} \nabla \left(\frac{u_{\varepsilon}-u}{\varepsilon}\right) \nabla v dx
+ \int_{\partial \Omega} \beta \left(\frac{u_{\varepsilon}-u}{\varepsilon}\right) v ds
+ \int_{\partial \Omega} l u_{\varepsilon} v ds \\
&=I + II
\end{split}
\end{equation}
with
$$
I := \frac{\lambda}{\left(\int_{\Omega} f(u_{\varepsilon})\, dx\right)^{2}}
\int_{\Omega}\frac{f(u_{\varepsilon})-f(u)}{\varepsilon} \cdot v dx
$$
and
$$
II := \frac{\lambda}{\varepsilon} \left(
\frac{1}{\left(\int_{\Omega} f(u_{\varepsilon})\, dx\right)^{2}}
- \frac{1}{\left(\int_{\Omega} f(u)\, dx\right)^{2}}\right)
\int_{\Omega} f(u) v \, dx .
$$
We can write $II$ as follows:
\begin{equation*}
\begin{split}
II&= \frac{\lambda}{\varepsilon} \frac{( \int_{\Omega} f(u)\, dx)^{2}
- ( \int_{\Omega} f(u_{\varepsilon})\, dx )^{2}}{(
\int_{\Omega} f(u)\, dx )^{2} ( \int_{\Omega} f(u_{\varepsilon})\,
dx )^{2}} \int_{\Omega}f(u) v dx\\
& = \lambda \int_{\Omega}
\frac{(f(u)-f(u_{\varepsilon}))}{\varepsilon}dx \times
\frac{\int_{\Omega}(f(u)+f(u_{\varepsilon}))dx}{( \int_{\Omega}
f(u_{\varepsilon})\, dx )^{2} ( \int_{\Omega} f(u)\, dx )^{2}}
\int_{\Omega}f(u) v dx \, .
\end{split}
\end{equation*}
One can show, using weak convergence \eqref{eqconverge}, that
\begin{equation*}
II \rightarrow \frac{-2 \lambda \int_{\Omega} f(u)v dx }{(
\int_{\Omega} f(u)\, dx )^{3}} \int_{\Omega} f'(u) \psi dx \mbox{
as } \varepsilon \rightarrow 0.
\end{equation*}
In the same manner we have
\begin{equation*}
I \rightarrow \frac{\lambda}{( \int_{\Omega} f(u)\, dx )^{2}}
\int_{\Omega} f'(u) \psi v dx
 \mbox{ as } \varepsilon \rightarrow 0.
\end{equation*}
Again, from the weak convergence \eqref{eqconverge}, we conclude
that, as $\varepsilon \rightarrow 0$, \eqref{eq5} converges  to
\begin{multline*}
\int_{\Omega} \frac{\partial \psi}{\partial t} v dx +
\int_{\Omega} \nabla \psi \nabla v + \int_{\partial \Omega} \beta
\psi v ds + \int_{\partial \Omega} l u v ds\\
= \frac{-2\lambda \int_{\Omega} f(u) v dx}{( \int_{\Omega} f(u)\,
dx )^{3}} \int_{\Omega}f'(u) \psi dx  + \frac{\lambda}{(
\int_{\Omega} f(u)\, dx )^{2}} \int_{\Omega} f'(u) \psi v dx
\end{multline*}
for every $v \in H^{1}(\Omega)$. In other words,
\begin{multline}
\label{equa7}
\int_{\Omega} \frac{\partial \psi}{\partial t} v dx +
\int_{\Omega} \nabla \psi \nabla v dx + \int_{\partial \Omega}(
\beta \psi + l u) v ds\\
= \frac{-2\lambda \int_{\Omega} f(u) v dx}{( \int_{\Omega} f(u)\,
dx )^{3}} \int_{\Omega}f'(u) \psi dx + \frac{\lambda}{(
\int_{\Omega} f(u)\, dx )^{2}} \int_{\Omega} f'(u) \psi v dx.
\end{multline}
We can rewrite \eqref{equa7} as follows:
\begin{multline*}
\int_{\Omega} \frac{\partial \psi}{\partial t} v dx +
\int_{\Omega} -\triangle \psi  v dx +  \int_{\partial \Omega}(
\frac{\partial \psi}{\partial \nu}+ \beta \psi + l u) v ds\\
= \frac{-2\lambda \int_{\Omega} f(u) v dx}{( \int_{\Omega} f(u)\,
dx )^{3}} \int_{\Omega}f'(u) \psi dx + \frac{\lambda}{(
\int_{\Omega} f(u)\, dx )^{2}} \int_{\Omega} f'(u) \psi v dx.
\end{multline*}
We conclude that $\psi$ satisfies the system
\begin{equation*}
\begin{gathered}
\frac{\partial \psi}{\partial t} - \triangle \psi =
\frac{-2\lambda \int_{\Omega} f'(u) \psi dx}{( \int_{\Omega}
f(u)\, dx )^{3}} f(u) + \frac{\lambda f'(u) \psi}{( \int_{\Omega}
f(u)\, dx )^{2}}\quad \mbox{ in } \Omega \, , \\
\frac{\partial \psi}{\partial \nu}+ \beta \psi + l u = 0
\quad \mbox{ on } \partial \Omega.
\end{gathered}
\end{equation*}
This completes the proof of Theorem~\ref{thm31}.


\subsection{Derivation of the adjoint system}

In order to derive the optimality system and to characterize the
optimal control, we introduce an adjoint function
$\varphi$, defined in $Q_{T}$ and enough smooth,
and the adjoint operator associated with $\psi$.
Multiplying the first equation of \eqref{sys} by $\varphi$ and
integrating in space and time, we have
\begin{multline}
\label{eqadj}
\int_{Q_{T}} \frac{\partial \psi}{\partial t} \cdot \varphi dx dt
+ \int_{Q_{T}} -\triangle \psi \cdot \varphi dx dt\\
=\frac{-2\lambda \int_{\Omega} f'(u) \psi dx}{( \int_{\Omega}
f(u)\, dx )^{3}} \int_{Q_{T}} f(u) \varphi dx dt +
\frac{\int_{Q_{T}}\lambda f'(u) \psi \varphi dx dt}{(
\int_{\Omega} f(u)\, dx )^{2}} \mbox{ in } \Omega \, .
\end{multline}
Integrating by parts \eqref{eqadj} with respect to time, and
imposing the boundary and initial conditions
$$
\frac{\partial \varphi}{\partial \nu} + \beta \varphi = 0
\mbox{ on } \partial \Omega \times (0, T),
\quad \varphi(T) = 0 \, , \quad \varphi(0)= 0\, ,
$$
we obtain
\begin{multline*}
\int_{\Omega} \psi(T) \varphi(T) dx - \int_{\Omega} \psi(0) \varphi(0) dx
+\int_{Q_{T}}- \frac{\partial \varphi}{\partial t}. \psi dx dt
+\int_{Q_{T}} -\triangle   \varphi . \psi dx dt\\
 = \frac{-2\lambda
\int_{Q_{T}} f(u) \varphi dx dt}{( \int_{\Omega} f(u)\, dx )^{3}}
\int_{\Omega} f'(u) \psi dx  + \frac{\lambda \int_{Q_{T}} f'(u)
\varphi \psi dx dt}{( \int_{\Omega} f(u)\, dx )^{2}}.
\end{multline*}
Thus, the function $\varphi$ satisfies the adjoint system given by
\begin{equation}
\label{eq10}
\begin{gathered}
- \frac{\partial \varphi}{\partial t} - \triangle   \varphi
= \frac{-2\lambda \int_{\Omega} f(u) \varphi dx}{( \int_{\Omega}
f(u)\, dx )^{3}} f'(u)+ \frac{\lambda  f'(u) \varphi}{(
\int_{\Omega} f(u)\, dx )^{2}}+1  \mbox{ in } Q_{T},\\
\frac{\partial \varphi}{\partial \nu} + \beta \varphi = 0
\mbox{ on } \partial \Omega \times (0, T),\\
\varphi(0)= 0\, , \quad \varphi(T) = 0 \, ,
\end{gathered}
\end{equation}
where the $1$ appears from differentiation
of the integrand of $J(\beta)$ with respect
to the state $u$.

\begin{theorem}[(Existence of solution to the adjoint system)\ ]
\label{thm}
Given an optimal control $\beta \in U_{M}$ and the corresponding state $u$,
there exists a solution $\varphi$ to the adjoint system \eqref{eq10}.
\end{theorem}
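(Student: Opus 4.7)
The plan is to convert the backward-in-time problem \eqref{eq10} into a standard forward linear parabolic system by time reversal, and then to solve this forward problem via a Faedo--Galerkin approximation. The key structural observation is that, once the optimal control $\beta$ and the associated state $u=u(\beta)$ are \emph{fixed}, \eqref{eq10} is linear in $\varphi$ (albeit nonlocal through a spatial integral), so the whole argument reduces to classical linear parabolic theory combined with uniform a priori estimates.

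First I would set $\tilde{\varphi}(x,t):=\varphi(x,T-t)$ and $\tilde{u}(x,t):=u(x,T-t)$. Under this change of variable \eqref{eq10} becomes
\begin{equation*}
\begin{gathered}
\partial_t \tilde{\varphi} - \triangle \tilde{\varphi} = a(x,t)\,\tilde{\varphi} + b(x,t)\int_{\Omega} f(\tilde{u}(y,t))\,\tilde{\varphi}(y,t)\,dy + 1 \quad \mbox{in } Q_T,\\
\partial_\nu \tilde{\varphi} + \beta\,\tilde{\varphi} = 0 \quad \mbox{on } S_T, \qquad \tilde{\varphi}(\cdot,0)=0,
\end{gathered}
\end{equation*}
with $a(x,t)=\lambda f'(\tilde{u})/(\int_{\Omega} f(\tilde{u})\,dy)^{2}$ and $b(x,t)=-2\lambda f'(\tilde{u})/(\int_{\Omega} f(\tilde{u})\,dy)^{3}$. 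Hypotheses (H1) and (H4) make $f'$ continuous and bounded by the Lipschitz constant of $f$; (H2) gives $\int_{\Omega} f(\tilde{u})\,dy\geq c|\Omega|>0$ uniformly in $t$; combined with the $L^\infty$-bound on $u$ already invoked in the proof of Lemma~\ref{lem32}, this guarantees that $a$, $b$ and $f(\tilde{u})$ all belong to $L^\infty(Q_T)$.

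Next I would run a Faedo--Galerkin scheme based on a Hilbert basis $\{w_j\}_{j\geq 1}$ of $H^1(\Omega)$. For each $N$ the Galerkin coefficients satisfy a linear ODE system with bounded coefficients, hence are globally defined on $[0,T]$. Testing the approximation $\tilde{\varphi}_N$ against itself, dominating the nonlocal term by $\|f(\tilde{u})\|_{2}\|\tilde{\varphi}_N\|_{2}$ via Cauchy--Schwarz, using Young's inequality and the equivalence of $\|\cdot\|_*$ and $\|\cdot\|_{H^1}$ already exploited in \eqref{eq:ref:ak:2}, and finally invoking Gronwall's lemma, would produce the uniform bound
\begin{equation*}
\|\tilde{\varphi}_N\|_{L^\infty(0,T;L^2(\Omega))}+\|\tilde{\varphi}_N\|_{L^2(0,T;H^1(\Omega))}+\|\partial_t\tilde{\varphi}_N\|_{L^2(0,T;H^{-1}(\Omega))}\leq C.
\end{equation*}
Aubin's compactness lemma then yields strong $L^2(Q_T)$ and weak $L^2(0,T;H^1(\Omega))$ convergence, along a subsequence, of $(\tilde{\varphi}_N)$ to some $\tilde{\varphi}$.

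The main obstacle I anticipate is exactly this nonlocal coupling, which forbids a direct appeal to off-the-shelf linear parabolic results; however, because the coupling is linear in $\varphi$ and the kernel $f(\tilde{u})$ is a \emph{fixed} $L^\infty(Q_T)$ function, it is enough to observe that $\int_{\Omega} f(\tilde{u})\,\tilde{\varphi}_N\,dy\rightharpoonup \int_{\Omega} f(\tilde{u})\,\tilde{\varphi}\,dy$ weakly in $L^2(0,T)$, which follows from the weak $L^2(Q_T)$ convergence of $\tilde{\varphi}_N$ tested against $f(\tilde{u})$. Passage to the limit in the Galerkin identity then produces a weak solution $\tilde{\varphi}$ of the forward system, and reversing time once more, $\varphi(x,t):=\tilde{\varphi}(x,T-t)$ provides the sought solution to \eqref{eq10} with the terminal condition $\varphi(T)=0$ built in by construction. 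An equivalent alternative, which I would keep in reserve in case the Galerkin estimates turn out to be delicate near $t=0$, is a Banach fixed point applied on short time subintervals to the map $\eta\mapsto\tilde{\varphi}[\eta]$ solving the \emph{local} linear parabolic equation obtained by freezing the nonlocal integral at $\int_{\Omega} f(\tilde{u})\,\eta\,dy$; contraction follows from the $L^\infty$-boundedness of $f(\tilde{u})$ and a short-time smallness factor, and the global solution is recovered by concatenation.
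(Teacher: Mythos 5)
Your proposal is a genuinely different route from the paper's, for the simple reason that the paper does not actually prove Theorem~\ref{thm}: its entire proof is the one-line citation ``Follows by the arguments in \citep{sidel}.'' You instead supply a self-contained argument --- time reversal to a forward problem, Faedo--Galerkin approximation, energy estimates using the boundedness of $f'$, the lower bound $\int_{\Omega}f(u)\,dx\geq c|\Omega|$ from (H2), and the $L^{\infty}$ bound on $u$, followed by Aubin compactness and passage to the limit in the nonlocal term. The structure is sound: once $\beta$ and $u$ are frozen, \eqref{eq10} is linear in $\varphi$ with $L^{\infty}$ coefficients, the nonlocal term $\int_{\Omega}f(u)\varphi\,dy$ is a bounded linear functional of $\varphi$ so it is harmless in the energy estimate (it contributes $c\|\varphi\|_{2}^{2}$ after Cauchy--Schwarz), and the limit passage you describe is correct since the kernel $f(u)$ is a fixed function. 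What your approach buys is an actual verifiable proof where the paper offers only a pointer to an analogous but different system; what the citation buys the authors is brevity at the cost of checkability.

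One caveat you should address explicitly: as written, \eqref{eq10} imposes \emph{both} $\varphi(0)=0$ and $\varphi(T)=0$, which over-determines a first-order-in-time parabolic equation; your construction (correctly) delivers only the terminal condition $\varphi(T)=0$, and generically $\varphi(0)\neq 0$ because of the source term $+1$. This is really a defect of the paper's formulation --- in the integration by parts leading to \eqref{eq10} the term $\int_{\Omega}\psi(0)\varphi(0)\,dx$ already vanishes because $\psi(0)=0$ (the difference quotient of the state vanishes at $t=0$ since $u_{\varepsilon}(0)=u(0)=u_{0}$), so the condition $\varphi(0)=0$ is superfluous and should be dropped. You should say so explicitly rather than silently producing a solution that satisfies only one of the two stated conditions; otherwise your proof does not literally establish the theorem as stated.
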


\begin{proof}
Follows by the arguments in \citep{sidel}.
\end{proof}


\subsection{Derivation of the optimality system}

Gathering equation \eqref{eq1} and the adjoint system
\eqref{eq10}, we obtain the following optimality system:
\begin{equation}
\label{eq11}
\begin{gathered}
u_{t}- \triangle u = \frac{\lambda f(u)}{( \int_{\Omega}
f(u)\, dx )^{2}}, \\
- \frac{\partial \varphi}{\partial t} - \triangle   \varphi
= \frac{-2\lambda \int_{\Omega} f(u) \varphi dx}{( \int_{\Omega}
f(u)\, dx )^{3}} f'(u)+ \frac{\lambda  f'(u) \varphi}{(
\int_{\Omega} f(u)\, dx )^{2}}+1  \mbox{ in } Q_{T},\\
\frac{\partial u}{\partial \nu} + \beta u = 0
\mbox{ on } \partial \Omega \times (0, T),\\
\frac{\partial \varphi}{\partial \nu} + \beta \varphi = 0
\mbox{ on } \partial \Omega \times (0, T),\\
\varphi(0)= 0 \, , \quad \varphi(T)  = 0, \\
u(0)=u_0 .
\end{gathered}
\end{equation}

\begin{remark}
The existence of solution to the optimality system \eqref{eq11}
follows from the existence of solution to the state system \eqref{eq1}
and the adjoint system \eqref{eq10}, gathered with the existence of optimal
control.
\end{remark}

We characterize the optimal control with the help
of the arguments of \citep{vol}.

\begin{lemma}
The optimal control $\beta$ is explicitly given by
\begin{equation}
\label{eqcharact}
\beta(x, t)= \min \left ( \max \left( -\frac{u
\varphi}{2}, m \right), M \right).
\end{equation}
\end{lemma}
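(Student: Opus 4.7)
The plan is to apply the first-order variational inequality for constrained minimization on the convex admissible set $U_M$. Since $\beta$ is optimal and $U_M$ is convex, for any $\bar{\beta} \in U_M$ the convex combination $\beta + \varepsilon(\bar{\beta}-\beta)$ remains in $U_M$ for $\varepsilon \in [0,1]$, so $J(\beta + \varepsilon(\bar{\beta}-\beta)) \geq J(\beta)$. Dividing by $\varepsilon$ and letting $\varepsilon \to 0^+$ produces $J'(\beta)[l] \geq 0$ for every $l = \bar{\beta} - \beta$ with $\bar{\beta} \in U_M$.

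First I compute the G\^{a}teaux derivative of $J$ at $\beta$ in the direction $l$. The boundary penalty contributes $2\int_{S_T} \beta\, l \, ds\, dt$ directly, and by the chain rule through $u = u(\beta)$, combined with Theorem~\ref{thm31}, the volume part $\int_{Q_T} u\, dx\, dt$ contributes $\int_{Q_T} \psi\, dx\, dt$, where $\psi$ is the sensitivity of $u$ with respect to $\beta$ in the direction $l$ as defined by \eqref{sys}.

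Next I use the adjoint system \eqref{eq10} to rewrite $\int_{Q_T} \psi\, dx\, dt$ as a surface integral depending only on $u$, $\varphi$, and $l$. Multiplying the first equation of \eqref{sys} by $\varphi$ and the first equation of \eqref{eq10} by $\psi$, subtracting, and integrating over $Q_T$, the critical algebraic step is that after integrating over $\Omega$ the nonlocal bilinear terms $f'(u)\psi \int_\Omega f(u)\varphi\, dx$ and $f(u)\varphi \int_\Omega f'(u)\psi\, dx$ match, and the two local terms $\lambda f'(u)\varphi\psi/(\int_\Omega f(u)\, dx)^{2}$ cancel identically, so that the only surviving right-hand side contribution is the source $1$ from $\partial_u(u)=1$ in the cost. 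On the left, Green's second identity combined with the Robin conditions $\partial_\nu \psi = -\beta\psi - lu$ and $\partial_\nu \varphi = -\beta\varphi$ produces a boundary integral proportional to $\int_{S_T} lu\varphi\, ds\, dt$, while the temporal boundary terms vanish because $\psi(\cdot,0) = 0$ (since $u_0$ is independent of $\beta$) and $\varphi(\cdot,T) = 0$. The outcome is the reduced gradient identity $J'(\beta)[l] = \int_{S_T}(2\beta + u\varphi)\, l \, ds\, dt$.

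Finally I apply a pointwise localization argument. The inequality $\int_{S_T} (2\beta + u\varphi)\, l \, ds\, dt \geq 0$ for every admissible $l$ forces, by selecting test directions $l$ supported successively on the sets $\{m < \beta < M\}$, $\{\beta = m\}$, and $\{\beta = M\}$, the complementarity conditions $2\beta + u\varphi = 0$ in the interior, $2\beta + u\varphi \geq 0$ on $\{\beta = m\}$, and $2\beta + u\varphi \leq 0$ on $\{\beta = M\}$. Rearranging each case gives $\beta = -u\varphi/2$ where admissible, and clipping to $m$ or $M$ otherwise, which is exactly the projection formula \eqref{eqcharact}. The main obstacle is the adjoint duality cancellation: once the nonlocal quadratic terms are verified to balance so that the reduced gradient depends only on $u$, $\varphi$, $\beta$, and $l$, the pointwise characterization is a routine projection onto the box $[m,M]$.
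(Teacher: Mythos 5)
Your proposal is correct and follows essentially the same route as the paper: one-sided directional derivative of $J$ at the minimizer, the sensitivity $\psi$ from Theorem~\ref{thm31}, adjoint duality to reduce $\int_{Q_T}\psi\,dx\,dt$ to the boundary integral in $l$, $u$, $\varphi$, and the three-case localization yielding the projection onto $[m,M]$. You actually make explicit the duality cancellation of the nonlocal terms that the paper compresses into the shorthand $\int_{Q_T}(\psi,\varphi)(1,0)\,dx\,dt$, which is a welcome addition rather than a deviation.
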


\begin{proof}
Because the minimum of the cost functional $J$ is achieved at
$\beta$, using \eqref{sys}, the convergence results \eqref{eqconverge},
and the second equation of the system \eqref{eq11},
we have, for a variation $l \in U_{M}$
with $\beta + \epsilon l \in U_{M}$ and $\epsilon > 0$
sufficiently small, that
\begin{equation*}
\begin{split}
0 & \leq \lim_{\epsilon \rightarrow 0} \frac{J(\beta + \epsilon
l)-J(\beta)}{\epsilon} \\
&=\lim_{\epsilon \rightarrow 0} \frac{1}{\epsilon}\left
\{\int_{Q_{T}}\left ( u(\beta + \epsilon l)- u(\beta)\right)dx dt+
\int_{\partial \Omega \times (0, T) } \left( (\beta + \epsilon l)^{2}
-\beta^{2}\right ) \, ds \, dt \right \}\\
& \leq \lim_{\epsilon \rightarrow 0}\int_{Q_{T}} \frac{u(\beta +
\epsilon l)- u(\beta)}{\epsilon} dx dt+ 2\int_{\partial \Omega
\times (0, T) } \beta l \, ds \, dt\\
& \leq \int_{Q_{T}} \psi dx dt + 2\int_{\partial \Omega \times (0,
T) } \beta l \, ds \, dt = \int_{Q_{T}} (\psi, \varphi)(1, 0) dx dt
+ 2\int_{\partial \Omega \times (0,T) } \beta l \, ds \, dt \\
& \leq \int_{\partial \Omega \times (0, T) }\left ( 2\beta l +l u
\varphi \right) \, ds \, dt\\
& \leq \int_{\partial \Omega \times (0, T)}
l \left( 2 \beta + u \varphi \right)\, ds \, dt.
\end{split}
\end{equation*}
Using the arguments and techniques in \citep{vol}
involving choices of the variation function $l$,
we have three cases to distinguish.
(i) Take the variation $\ell$ to have support on the set
$\{x\in \partial \Omega: m<\beta(x,t)<M\}$. The variation
$\ell(x,t)$ can be of any sign, therefore we obtain
$2\beta+u\varphi=0$, whence $\beta=-\frac{u\varphi}{2}$.
(ii) On the set $\{(x,t)\in S_{T}=\partial \Omega
\times(0,T):\beta(x,t)=M\}$, the variation must satisfy
$\ell(x,t)\leq 0$ and therefore we get $2\beta+u\varphi\leq 0$,
implying $M=\beta(x,t)\leq-\frac{u\varphi}{2}$.
(iii)  On the set $\{(x,t)\in\partial
\Omega\times(0,T):\beta(x,t)=m\}$, the variation must satisfy
$\ell(x,t)\geq 0$. This implies $2\beta+u\varphi\geq 0$ and hence
$m=\beta(x)\geq-\frac{u\varphi }{2}$. Combining cases (i), (ii),
and (iii) gives
\begin{equation*}
\beta =
\begin{cases}
- \frac{u \varphi}{2}
& \mbox{ if } m < - \frac{u \varphi}{2} < M ,\\
M,
& \mbox{ if  } - \frac{u \varphi}{2} \geq  M,\\
m & \mbox{ if } - \frac{u \varphi}{2} \leq m .
\end{cases}
\end{equation*}
This can be written compactly as \eqref{eqcharact}.
\end{proof}


\subsection{Particular case: a constant heat transfer coefficient}

Let us consider now the case when the heat transfer coefficient
$\beta$ is a constant, \textrm{i.e.}, when $\beta$ is independent
of $x$ and $t$, and
\begin{equation}
\label{eq:J:pc}
J= \int_{\Omega} u dx + \beta^{2}.
\end{equation}
We need to adjust the parameter $\beta \in U_{M}$ in such way that
the new form of the functional \eqref{eq:J:pc} is minimized. Then,
all the theory of existence of optimal control and derivation of
the optimality system, that one needs to put into the proofs of
the previous sections  carries over to this case and are simpler.
As for the characterization of optimal control, we have:

\begin{lemma}
The optimal parameter characterization related to \eqref{eq:J:pc}
is
\begin{equation}
\label{eq23}
\beta = \min \left ( \max \left ( m, -\frac{1}{2} \int_{\partial
\Omega} u \varphi ds \right ), M \right ).
\end{equation}
\end{lemma}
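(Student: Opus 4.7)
The plan is to run the Gâteaux-derivative/duality argument of the previous lemma again, restricted to scalar variations: since now $\beta$ ranges over the real interval $[m,M]$ rather than over an $L^{\infty}$-ball of functions, an admissible variation $\ell$ is a real constant with $\beta+\varepsilon\ell\in[m,M]$ for small $\varepsilon>0$, and the pointwise-in-$(x,t)$ sign analysis of the previous proof collapses to a single scalar sign condition on $\ell$.

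I would first redo the sensitivity analysis of Section~\ref{sec:4} verbatim: the quotient $(u(\beta+\varepsilon\ell)-u(\beta))/\varepsilon$ still converges, along a subsequence, to a function $\psi$ solving the linearised system \eqref{sys}, the only change being that $\ell$ enters the Robin condition $\partial_{\nu}\psi+\beta\psi+\ell u=0$ as a real constant rather than a function. The a priori estimates of Lemma~\ref{lem33} and the compactness arguments producing \eqref{eqconverge} carry over without change. The adjoint $\varphi$ is taken to be the solution of \eqref{eq10}, unchanged, since $\partial_{u}\bigl(\int_{\Omega}u\,dx\bigr)=1$ still generates the same forcing term as in the non-constant case.

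Next, I would pair the $\psi$-equation with $\varphi$ and the $\varphi$-equation with $\psi$, integrate by parts in $x$ and $t$, and exploit the matching Robin data together with $\varphi(T)=0$ and $\psi(0)=0$. The symmetry of the nonlocal source terms in \eqref{sys} and \eqref{eq10} guarantees that they cancel upon subtraction of the two pairings, leaving a duality identity that expresses $\int_{\Omega}\psi\,dx$ as a constant multiple of $\ell\int_{\partial\Omega}u\varphi\,ds$. Combined with the elementary identity $\frac{d}{d\varepsilon}J(\beta+\varepsilon\ell)\big|_{\varepsilon=0}=\int_{\Omega}\psi\,dx+2\beta\ell$ and the optimality of $\beta$ on $[m,M]$, this yields a scalar optimality inequality of the form $\ell\bigl(2\beta+\int_{\partial\Omega}u\varphi\,ds\bigr)\ge 0$ for every admissible real $\ell$.

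Finally, a standard three-case analysis closes the proof: if $m<\beta<M$ then $\ell$ is unrestricted in sign, so the parenthesis must vanish and $\beta=-\frac{1}{2}\int_{\partial\Omega}u\varphi\,ds$; if $\beta=M$ only $\ell\le 0$ is admissible, forcing $-\frac{1}{2}\int_{\partial\Omega}u\varphi\,ds\ge M=\beta$; and if $\beta=m$ only $\ell\ge 0$ is admissible, forcing $-\frac{1}{2}\int_{\partial\Omega}u\varphi\,ds\le m=\beta$. These three cases combine into exactly \eqref{eq23}. The one genuinely delicate point is the sign and cancellation bookkeeping in the duality identity; once this is arranged exactly as in the derivation of \eqref{eq10} from \eqref{sys}, the remainder is routine and mirrors the non-constant case.
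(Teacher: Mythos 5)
Your proposal is correct and follows essentially the same route as the paper: compute the Gâteaux derivative $\int_{\Omega}\psi\,dx+2\beta\ell$, convert $\int_{\Omega}\psi\,dx$ into $\ell\int_{\partial\Omega}u\varphi\,ds$ by pairing the linearised system \eqref{sys} with the adjoint $\varphi$ (which is exactly what the paper's terse phrase ``multiplying the optimality system with the test function $\psi$ and integrating by parts'' means), and then run the three-case sign analysis on the scalar variation $\ell$. Your write-up merely makes explicit the cancellation of the nonlocal terms and the role of $\psi(0)=0$, $\varphi(T)=0$, which the paper leaves implicit.
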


\begin{proof}
For the characterization of the optimal control we take
into account the new expression of the cost functional \eqref{eq:J:pc}:
\begin{equation*}
\begin{split}
0 & \leq \lim_{\epsilon \rightarrow 0} \frac{J(\beta + \epsilon
l)-J(\beta)}{\epsilon} \\
&=\lim_{\epsilon \rightarrow 0} \frac{1}{\epsilon}\left \{
\int_{\Omega} u(\beta +\epsilon l)dx + (\beta +\epsilon l)^{2}-
\int_{\Omega} u(\beta)dx - \beta^{2}
 \right \}\\
&=\lim_{\epsilon \rightarrow 0} \left\{  \int_{\Omega} \frac{u(\beta
+\epsilon l) - u(\beta)}{\epsilon} dx +
\frac{(\beta +\epsilon l)^{2} - \beta^{2}}{\epsilon} \right\} \\
 & = \int_{ \Omega} \psi dx + 2 \beta l \\
& = \int_{ \Omega} (\psi,   \varphi)(1, 0) dx + 2 \beta l .
\end{split}
\end{equation*}
Multiplying the optimality system \eqref{eq11} with the test
function $\psi$, integrating by parts, and using \eqref{sys}, we find that
\begin{equation*}
\begin{split}
0 & \leq \lim_{\epsilon \rightarrow 0} \frac{J(\beta + \epsilon
l)-J(\beta)}{\epsilon}  = l \int_{\partial \Omega} u \varphi ds  +
2 \beta l .
\end{split}
\end{equation*}
Therefore,
$$
l\left(\int_{\partial \Omega} u \varphi ds  + 2 \beta\right) \geq 0.
$$
Repeating all the steps as those yielding to
\eqref{eqcharact}, we obtain that the optimal parameter $\beta$ is
characterized by \eqref{eq23}.
\end{proof}


\section{Uniqueness of the optimal control}
\label{sec:5}

The uniqueness of the optimal control is mainly based on the
$L^{\infty}$ boundedness of $u$ and $\varphi$. These are quite
realistic assumptions since physical quantities are always bounded.
It has been shown in \citep{ajse} that
$u \in L^{\infty}(\Omega)$.
It remains to establish that $\varphi$ is also
essentially bounded.

\begin{lemma}
Under hypotheses (H1)--(H4) one has
$\varphi \in L^{\infty}(\Omega)$.
\end{lemma}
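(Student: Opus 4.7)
The plan is to recast the adjoint problem as a \emph{forward} parabolic problem with bounded coefficients and a bounded source, and then to extract the $L^{\infty}$ bound via a Stampacchia (level-truncation) argument.

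First, I would set $\tilde\varphi(x,t):=\varphi(x,T-t)$. Then the terminal condition $\varphi(T)=0$ turns into the initial condition $\tilde\varphi(0)=0$, the Robin boundary condition is preserved, and \eqref{eq10} becomes
\begin{equation*}
\tilde\varphi_t-\triangle\tilde\varphi
= A(x,t)\,\tilde\varphi + B(x,t)\int_{\Omega} C(y,t)\,\tilde\varphi(y,t)\,dy + 1
\quad \mbox{in } Q_T,
\end{equation*}
with
\begin{equation*}
A=\frac{\lambda f'(u)}{(\int_\Omega f(u)\,dx)^{2}}, \quad
B=\frac{-2\lambda f'(u)}{(\int_\Omega f(u)\,dx)^{3}}, \quad
C=f(u),
\end{equation*}
all evaluated at $T-t$. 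Since $u\in L^{\infty}(Q_T)$ by \citep{ajse}, since (H1) and (H4) ensure that $f$ and $f'$ are bounded on the range of $u$, and since (H2) gives $\int_\Omega f(u)\,dx\geq c|\Omega|>0$, the coefficients $A,B,C$ and the source lie in $L^{\infty}(Q_T)$.

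Next, I would run a Stampacchia truncation. Testing the equation with $(\tilde\varphi-k)^+$ for $k>0$, integrating over $\Omega$, and discarding the nonnegative boundary term produced by $\beta \geq m > 0$, the nonlocal contribution is handled pointwise in $x$ by Cauchy--Schwarz,
\begin{equation*}
\left| B(x,t)\int_\Omega C(y,t)\,\tilde\varphi(y,t)\,dy\right|
\leq \|B\|_{\infty}\|C\|_{\infty}|\Omega|^{1/2}\|\tilde\varphi(t)\|_{2}.
\end{equation*}
Combined with an a priori $L^{2}$ bound on $\tilde\varphi$, obtained by a Gronwall argument analogous to Lemma~\ref{lem32} (which works because the equation is linear in $\tilde\varphi$), this yields an inequality of the Stampacchia form
\begin{equation*}
\frac{d}{dt}\|(\tilde\varphi-k)^+\|_{2}^{2}
+\mu\|(\tilde\varphi-k)^+\|_{H^{1}(\Omega)}^{2}
\leq c\,|A_k(t)|^{1+\delta},
\end{equation*}
for some $\delta>0$, where $A_k(t)=\{x\in\Omega : \tilde\varphi(x,t)>k\}$. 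Iterating over a sequence of levels $k_n\uparrow k_\infty$ through De Giorgi's lemma then gives $(\tilde\varphi-k_\infty)^+\equiv 0$, hence $\tilde\varphi\leq k_\infty$ a.e.\ in $Q_T$. Repeating the argument with $-\tilde\varphi$ in place of $\tilde\varphi$ yields the corresponding lower bound, so $\varphi \in L^{\infty}(Q_T)$.

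The main obstacle is the nonlocal term $B\int_\Omega C\tilde\varphi\,dy$: it cannot be treated by a pointwise comparison principle, and so the usual scalar maximum-principle shortcut is unavailable. However, because $B$ and $C$ are in $L^{\infty}$, Cauchy--Schwarz reduces it to a bounded linear functional of $\|\tilde\varphi(t)\|_{2}$, and the preliminary $L^{2}$ estimate above suffices to close the truncation argument. Apart from this nonlocal bookkeeping, the argument is the standard De Giorgi--Ladyzhenskaya--Ural'tseva $L^{\infty}$ machinery for parabolic equations with bounded data.
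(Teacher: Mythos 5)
Your argument is correct in outline, but it takes a genuinely different route from the paper. The paper neither reverses time nor truncates: it multiplies the adjoint equation \eqref{eq10} by $-\varphi^{k+1}$ (i.e., tests with powers of $\varphi$), discards the positive gradient term, and arrives at the differential inequality $y_k' \leq c\, y_k + c$ for $y_k = \|\varphi\|_{L^{k+2}(\Omega)}$; Gronwall then gives a bound claimed to be independent of $k$, and letting $k\to\infty$ yields the $L^{\infty}$ bound --- an $L^{p}$-iteration ($p\to\infty$) version of the energy method in the spirit of the $L^{\infty}$-energy method of \citep{ajse}. You instead reverse time, observe that the coefficients and the source are bounded (using exactly the structural facts the paper relies on: $u\in L^{\infty}$, $f\geq c>0$ from (H2), and $f'$ bounded by the Lipschitz constant via (H1) and (H4)), reduce the nonlocal term to a source bounded in time through a preliminary $L^{2}$ estimate, and then run a De Giorgi--Stampacchia truncation; the treatment of the nonlocal term is essentially the same in both proofs (Cauchy--Schwarz plus an a priori $L^{2}$ bound). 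What your route buys is robustness: the truncation machinery needs only bounded coefficients and data, and it sidesteps the delicate point in the paper's argument, namely verifying that the Gronwall constant is genuinely uniform in $k$. What the paper's route buys is brevity: a two-line differential inequality replaces the level-set iteration. The only schematic spots in your sketch are the passage from the tested inequality to the Stampacchia form $c\,|A_k(t)|^{1+\delta}$ (which requires the parabolic Sobolev embedding and H\"older's inequality on the level sets --- standard, but worth a line) and the absorption of the quadratic term $\|A\|_{\infty}\|(\tilde\varphi-k)^{+}\|_{2}^{2}$ (by Gronwall or by working on small time intervals); neither is a gap.
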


\begin{proof}
Multiplying the second equation of \eqref{eq11}, governed by
$\varphi$, by $- \varphi^{k+1}$ for some enough big integer
$k > 2$, we have by the $L^{\infty}$ estimate
of $u$ and Young's inequality that
\begin{equation*}
\begin{split}
\frac{1}{k+2} \frac{\partial}{\partial t} \|\varphi\|_{k+2}^{k+2}
+ \int_{\Omega} \nabla \varphi \nabla(|\varphi|^{k+1}) dx
& \leq c \|\varphi\|_{L^{1}(\Omega)}\|\varphi\|_{L^{k+1}(\Omega)}^{k+1}
+ c\|\varphi\|_{L^{k+2}(\Omega)}^{k+2}
+ \|\varphi\|_{L^{k+1}(\Omega)}^{k+1} \\
& \leq c \|\varphi\|_{L^{k+1}(\Omega)}^{k+2}
+ c\|\varphi\|_{L^{k+2}(\Omega)}^{k+2}+ c\|\varphi\|_{L^{k+2}(\Omega)}^{k+1}\\
& \leq  c\|\varphi\|_{L^{k+2}(\Omega)}^{k+2}
+ c \|\varphi\|_{L^{k+2}(\Omega)}^{k+1}.
\end{split}
\end{equation*}
Then,
\begin{equation*}
\frac{1}{k+2} \frac{\partial}{\partial t} \|\varphi\|_{k+2}^{k+2}
+ (k+1)\int_{\Omega} |\varphi|^{k-2} |\nabla \varphi|^{2}dx
\leq  c\|\varphi\|_{L^{k+2}(\Omega)}^{k+2}+ c \|\varphi\|_{L^{k+2}(\Omega)}^{k+1}.
\end{equation*}
Taking into account that the second term of the left hand side
is positive, we have
\begin{equation*}
\frac{1}{k+2} \frac{\partial}{\partial t} \|\varphi\|_{k+2}^{k+2}
\leq  c\|\varphi\|_{L^{k+2}(\Omega)}^{k+2}+ c \|\varphi\|_{L^{k+2}(\Omega)}^{k+1}.
\end{equation*}
Setting $y_{k}= \|\varphi\|_{L^{k+2}(\Omega)}$, it follows that
$$
y_{k}^{k+1} \frac{\partial y_{k}}{\partial t} \leq c y_{k}^{k+2}+
c y_{k}^{k+1}.
$$
In other words,
$$
\frac{\partial y_{k}}{\partial t} \leq c y_{k} + c.
$$
By the Gronwall Lemma we have
$y_{k} =  \|\varphi\|_{L^{k+2}(\Omega)}  \leq  c$,
where $c$ are constants independent of $k$.
Letting $k \rightarrow \infty$, we have
$\|\varphi\|_{L^{\infty}(\Omega)} \leq c$.
\end{proof}

\begin{theorem}
\label{thm41}
If the hypotheses (H1)--(H4) hold, then the solution of the optimality
system \eqref{eq11} is unique and, therefore, the optimal control $\beta$ is unique.
\end{theorem}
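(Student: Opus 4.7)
The plan is to argue by contradiction (or rather by direct comparison): suppose $(u_1,\varphi_1,\beta_1)$ and $(u_2,\varphi_2,\beta_2)$ are two solutions of the optimality system \eqref{eq11}, set $U=u_1-u_2$, $\Phi=\varphi_1-\varphi_2$ and $B=\beta_1-\beta_2$, and show $U\equiv\Phi\equiv 0$, which by \eqref{eqcharact} forces $B\equiv 0$. First I would exploit the characterisation: since the truncation $\min(\max(\cdot,m),M)$ is $1$-Lipschitz, \eqref{eqcharact} gives
\[
|B(x,t)| \le \tfrac{1}{2}|u_1\varphi_1 - u_2\varphi_2|
\le C\bigl(|U| + |\Phi|\bigr) \quad\text{a.e. on } S_T,
\]
where $C$ depends on the $L^\infty$ bounds of $u_i$ and $\varphi_i$ established in Section~\ref{sec:5}. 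This is the key tool for closing the loop between the state and adjoint perturbations.

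Next I would subtract the two state equations and test the result against $U$ on $(0,t)\times\Omega$, and subtract the two adjoint equations and test against $\Phi$ on $(t,T)\times\Omega$ (the second test interval is chosen so that the backward-in-time structure combines favourably with $\varphi_i(T)=0$). Hypotheses (H1)--(H4) together with the $L^\infty$ bounds allow me to estimate $|f(u_1)-f(u_2)|\le c|U|$, $|f'(u_1)-f'(u_2)|\le c|U|$, and to bound the nonlocal denominators $\bigl(\int f(u_i)\bigr)^{-k}$ uniformly; each such quotient difference is controlled by $\|U\|_2$. The boundary terms produce $\int_{\partial\Omega}\beta_1 U^2$ (non-negative, discarded or absorbed on the left using the equivalent norm \eqref{eq:ref:ak:1}--\eqref{eq:ref:ak:2}) plus a cross term $\int_{\partial\Omega} B\,u_2\,U$ that, by the Lipschitz bound on $B$ and the trace inequality, is dominated by $c(\|U\|_{H^1}^2+\|\Phi\|_{H^1}^2)$ after a Young-inequality absorption of $\|U\|_{H^1}^2$ into the gradient term. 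The analogous computation for $\Phi$ produces symmetric terms.

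Combining the two estimates yields a coupled system of the form
\[
\frac{d}{dt}\|U\|_2^2 + c\,\|U\|_{H^1}^2
\le c_1\bigl(\|U\|_2^2 + \|\Phi\|_2^2\bigr),
\qquad
-\frac{d}{dt}\|\Phi\|_2^2 + c\,\|\Phi\|_{H^1}^2
\le c_1\bigl(\|U\|_2^2 + \|\Phi\|_2^2\bigr),
\]
with $U(0)=0$ and $\Phi(T)=0$. The main obstacle, and the reason this is more delicate than the usual parabolic uniqueness, is precisely this opposing time direction: a naïve Gronwall argument gives $\|U(t)\|_2^2\le c\int_0^t\|\Phi\|_2^2$ and $\|\Phi(t)\|_2^2\le c\int_t^T\|U\|_2^2$, which do not close by themselves. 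My plan to handle it is to introduce an exponential weight $e^{-\sigma t}$ (or equivalently work on a small interval $[T-\tau,T]$) and choose $\sigma$ large enough, depending on $c_1$ and the $L^\infty$ bounds, so that the weighted sum $e^{-\sigma t}\|U\|_2^2+e^{\sigma t}\|\Phi\|_2^2$ satisfies a one-sided Gronwall inequality; standard extension then propagates uniqueness from a small subinterval to all of $[0,T]$. A cleaner alternative, already used in \citep{sidel,vol} in a similar context, is to integrate the combined inequality in time, use the vanishing end-conditions $U(0)=0$ and $\Phi(T)=0$, and conclude $\|U\|_{L^2(Q_T)}^2+\|\Phi\|_{L^2(Q_T)}^2\le cT\bigl(\|U\|_{L^2(Q_T)}^2+\|\Phi\|_{L^2(Q_T)}^2\bigr)$ on a small interval where $cT<1$; iterating over subintervals of length less than $1/c$ then gives uniqueness on $[0,T]$. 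Either route delivers $U\equiv\Phi\equiv 0$, and \eqref{eqcharact} then yields $\beta_1=\beta_2$, completing the proof.
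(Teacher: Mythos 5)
Your overall strategy coincides with the paper's: subtract the two triples, use the $L^\infty$ bounds on $u_i,\varphi_i$ and the $1$-Lipschitz character of the truncation in \eqref{eqcharact} to control $|\beta_1-\beta_2|$ by $|U|+|\Phi|$ on the boundary, run energy estimates on the state and adjoint differences, and close with a Gronwall argument. Two points of divergence are worth flagging. First, where you absorb the boundary cross terms into the gradient via trace and Young (which requires the interpolated form $\|U\|^2_{L^2(\partial\Omega)}\le \epsilon\|\nabla U\|_2^2+C_\epsilon\|U\|_2^2$, not the plain trace inequality), the paper instead keeps coefficients of the form $m-\tfrac12\|u_1\|_\infty\|\varphi_1\|_\infty-c$ in \eqref{eq:d} and \eqref{eqs22} and assumes $m$ sufficiently large; your route is arguably cleaner if you state the interpolation explicitly.

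The genuine gap is in your closure of the Gronwall step. You correctly identify the forward--backward coupling as the obstacle, but neither of your proposed fixes is guaranteed to work for arbitrary $T$. The model system $a'=c(a+b)$, $-b'=c(a+b)$, $a(0)=b(T)=0$ admits the nonzero solution $a=cKt$, $b=cK(T-t)$ whenever $cT=1$, so the pair of differential inequalities you derive cannot by itself force $U\equiv\Phi\equiv 0$ beyond small $T$; moreover the subinterval iteration does not bootstrap, because on an interior subinterval neither $U$ nor $\Phi$ vanishes at the new endpoint, so the inequality on $[T-\tau,T]$ involves the unknown value $U(T-\tau)$ and does not close. The exponential-weight variant runs into the same mismatch: the cross terms $e^{-\sigma t}b$ and $e^{\sigma t}a$ carry opposite weights and cannot both be dominated by choosing $\sigma$ large. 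The paper avoids the issue entirely by imposing $\varphi(0)=0$ \emph{in addition to} $\varphi(T)=0$ in the adjoint system \eqref{eq10}--\eqref{eq11}; with both $w(0)=0$ and $\varphi(0)=0$, the summed inequality \eqref{equation2} is a genuine forward Gronwall inequality with zero initial data and yields $\|w\|_2^2+\|\varphi\|_2^2\equiv 0$ at once. If you adopt the optimality system exactly as stated in \eqref{eq11}, your proof closes immediately without weights or iteration; without that extra initial condition, you would need either a smallness restriction on $T$ (or on the coupling constants) or a monotonicity structure, and you should say so explicitly rather than appeal to a ``standard extension.''
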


\begin{proof}
Let $u_{1}$, $\varphi_{1}$ and $u_{2}$, $\varphi_{2}$ be two
solutions to the optimality system \eqref{eq11} and $\beta_{1}$,
$\beta_{2}$ be two optimal controls. Denote $w= u_{1}- u_{2}$
and $\varphi= \varphi_{2}- \varphi_{1}$. Upon subtracting
and estimating the difference between the equations governed
by $u_{1}$ and $u_{2}$, we have
\begin{equation}
\label{eq12:rf}
\frac{\partial}{\partial t}w - \triangle w= \frac{\lambda
f(u_{1})}{( \int_{\Omega} f(u_{1})\, dx )^{2}}- \frac{\lambda
f(u_{2})}{( \int_{\Omega} f(u_{2})\, dx)^{2}}
= g(u_{1}, u_{2})w,
\end{equation}
where
$$
g(u_{1}, u_{2})= \left (\frac{\lambda f(u_{1})}{(
\int_{\Omega} f(u_{1})\, dx )^{2}}
- \frac{\lambda f(u_{2})}{\int_{\Omega} f(u_{2})\, dx} \right)
/ \left(u_{2}-u_{1}\right).
$$
By using hypotheses (H1)--(H4) and the $L^{\infty}$
estimate of $u_{i}$, $i=1, 2$, we have
$g(u_{1}, u_{2}) \in L^{\infty}(\Omega)$.
Multiplying \eqref{eq12:rf} by $w$ yields
\begin{equation*}
\frac{1}{2}\frac{\partial \|w\|_{2}^{2}}{\partial t} +
\int_{\Omega}  |\nabla w|^{2} dx + \int_{\partial \Omega}
\beta_{2} |w|^{2} ds + \int_{\partial \Omega}
(\beta_{2}-\beta_{1})u_{1} w ds \leq c \|w\|_{2}^{2}.
\end{equation*}
Since $m \leq \beta_{2}$, we have
\begin{equation*}
\frac{1}{2}\frac{\partial}{\partial t} \|w\|_{2}^{2}+
\int_{\Omega}  |\nabla w|^{2} dx + m \int_{\partial \Omega}
|w|^{2} ds \leq c \|w\|_{2}^{2} + \int_{\partial \Omega}
|\beta_{2}-\beta_{1}||u_{1}| |w| ds.
\end{equation*}
It follows from
$\beta_{i}= \min \left ( \max (-\frac{u_{i} \varphi_{i}}{2}, m), M\right)$,
$i= 1, 2$, that
$$
|\beta_{2}- \beta_{1}| \leq \frac{1}{2} |u_{2}\varphi_{2}-
u_{1}\varphi_{1}| \leq \frac{1}{2}( |u_{2}\varphi| + |w
\varphi_{1}|).
$$
Then we get
\begin{equation*}
\begin{split}
\frac{1}{2}&\frac{\partial}{\partial t} \|w\|_{2}^{2}+
\int_{\Omega}  |\nabla w|^{2} dx + m \int_{\partial \Omega}
|w|^{2} ds \\
& \leq c \|w\|_{2}^{2} + \frac{1}{2} \int_{\partial
\Omega} |u_{2}\varphi_{2}- u_{1}\varphi_{1}|\, |u_{1}|\, |w| ds \\
& \leq c \|w\|_{2}^{2} + \frac{1}{2} \int_{\partial
\Omega}  ( |u_{2}\varphi| + |w
\varphi_{1}|)  \, |u_{1}|\, |w| ds \\
& \leq c \|w\|_{2}^{2} + \frac{1}{2} \|u_{1}\|_{\infty}
\|u_{2}\|_{\infty} \|\varphi\|_{L^{2}(\partial \Omega)}
\|w\|_{L^{2}(\partial \Omega)}
+ \frac{1}{2} \|u_{1}\|_{\infty} \|\varphi_{1}\|_{\infty}
\|w\|^{2}_{L^{2}(\partial \Omega)}
\end{split}
\end{equation*}
and, using Young's inequality,
\begin{equation}
\label{eq:d}
\frac{1}{2}\frac{\partial}{\partial t} \|w\|_{2}^{2}+
\int_{\Omega}  |\nabla w|^{2} dx +\left(
m-\frac{1}{2}\|u_{1}\|_{\infty} \|\varphi_{1}\|_{\infty}- c \right
) \|w\|^{2}_{L^{2}(\partial \Omega)}   \leq c \|w\|_{2}^{2} + c
\|\varphi\|_{L^{2}(\partial \Omega)}^{2}.
\end{equation}
On the other hand, using the adjoint system, we have
\begin{equation}
\label{eq13}
- \frac{\partial \varphi_{2}}{\partial t} - \triangle \varphi_{2}=
h_{2}(u_{1}, u_{2}, \varphi_{1},
\varphi_{2})(\varphi_{2}-\varphi_{1})
\end{equation}
and
\begin{equation}
\label{eq14}
- \frac{\partial \varphi_{1}}{\partial t} - \triangle \varphi_{1}=
h_{1}(u_{1}, u_{2}, \varphi_{1},
\varphi_{2})(\varphi_{2}-\varphi_{1})\, ,
\end{equation}
where
$$
h_{1}(u_{1}, u_{2}, \varphi_{1}, \varphi_{2})
 = \left ( -\frac{2\lambda \int_{\Omega} f(u_{1}) \varphi_{1} dx }{( \int_{\Omega}
f(u_{1})\, dx )^{3}} f'(u_{1})+ \frac{\lambda  f'(u_{1})
\varphi_{1}}{( \int_{\Omega} f(u_{1})\, dx )^{2}}+1 \right)
/ (\varphi_{2}- \varphi_{1})
$$
and
$$
 h_{2}(u_{1}, u_{2}, \varphi_{1}, \varphi_{2}) =
 \left ( -\frac{2\lambda \int_{\Omega} f(u_{2}) \varphi_{2} dx}{( \int_{\Omega}
f(u_{2})\, dx )^{3}} f'(u_{2})+ \frac{\lambda  f'(u_{2})
\varphi_{2}}{( \int_{\Omega} f(u_{2})\, dx )^{2}}+1 \right)
/(\varphi_{2}- \varphi_{1}) \, .
$$
Note that $h_{1}(u_{1}, u_{2}, \varphi_{1}, \varphi_{2}),
h_{2}(u_{1}, u_{2}, \varphi_{1}, \varphi_{2}) \in
L^{\infty}(\Omega)$. Subtracting \eqref{eq13} from \eqref{eq14},
we get
\begin{equation*}
\frac{\partial (\varphi_{2}-\varphi_{1}) }{\partial t}+ \triangle
(\varphi_{2}-\varphi_{1})= \left(h_{1}(u_{1}, u_{2}, \varphi_{1},
\varphi_{2})-h_{2}(u_{1}, u_{2}, \varphi_{1}, \varphi_{2})
\right)(\varphi_{2}-\varphi_{1})\, .
\end{equation*}
Multiplying the above equation by
$\varphi= \varphi_{2}-\varphi_{1}$,
using hypotheses, and the $L^{\infty}$ estimates
of $u_{1}, u_{2}, \varphi_{1}, \varphi_{2}, h_{1}, h_{2}$, we get
$$
\frac{1}{2}\frac{\partial}{\partial t}
\|\varphi_{2}-\varphi_{1}\|_{2}^{2}- \left \{ \int_{\Omega}
|\nabla (\varphi_{2}- \varphi_{1})|^{2}dx - \int_{\partial \Omega}
\frac{\partial}{\partial \nu} (\varphi_{2}
- \varphi_{1})(\varphi_{2}- \varphi_{1}) ds \right \} \leq c
\|\varphi_{2}-\varphi_{1}\|_{2}^{2}.
$$
Then
\begin{equation*}
\begin{split}
\frac{1}{2}\frac{\partial}{\partial t} \|\varphi\|_{2}^{2} -
\int_{\Omega} |\nabla \varphi|^{2}dx + \int_{\partial \Omega}
\left( \beta_{1} \varphi_{1}- \beta_{2}
\varphi_{2}\right)(\varphi_{2}-\varphi_{1}) ds & \leq c
\|\varphi\|_{2}^{2}
\end{split}
\end{equation*}
and it follows that
\begin{equation*}
\frac{1}{2}\frac{\partial}{\partial t} \|\varphi\|_{2}^{2} -
\int_{\Omega} |\nabla \varphi|^{2}dx + \int_{\partial \Omega}
\beta_{2}|\varphi|^{2} ds + \int_{\partial \Omega}
(\beta_{2}-\beta_{1})\varphi_{1}\varphi ds \leq
 c\|\varphi\|_{2}^{2}.
\end{equation*}
We have $\beta_{i}= \max \left(\min (m,
-\frac{u_{i}\varphi_{i}}{2}), M \right)$. Therefore,
$$
|\beta_{2}-\beta_{1}| \leq |u_{2} \varphi_{2}- u_{1} \varphi_{1}|
\leq \frac{1}{2} (|\varphi_{2} w| + |u_{1} \varphi|).
$$
Using the fact that $m \leq \beta_{2}$, we have
\begin{equation*}
\begin{split}
\frac{1}{2}&\frac{\partial}{\partial t} \|\varphi\|_{2}^{2} +m
\int_{\partial \Omega} |\varphi|^{2} ds \\
& \leq \int_{\Omega} |\nabla \varphi|^{2}dx + c\|\varphi\|_{2}^{2}
+ \frac{1}{2} \int_{\partial \Omega} |u_{2} \varphi_{2}
- u_{1} \varphi_{1}| \, |\varphi_{1}\varphi| ds\\
& \leq \int_{\Omega} |\nabla \varphi|^{2}dx + c\|\varphi\|_{2}^{2} +
\frac{1}{2} \int_{\partial \Omega} (|\varphi_{2} w|
+ |u_{1} \varphi|)\, |\varphi_{1}\varphi| ds \\
& \leq \int_{\Omega} |\nabla \varphi|^{2}dx + c\|\varphi\|_{2}^{2}
+\frac{1}{2}\|\varphi_{2}\|_{\infty} \|\varphi_{1}\|_{\infty}
\|w\|_{L^{2}(\partial \Omega)}\|\varphi\|_{L^{2}(\partial
\Omega)}+ \frac{1}{2}\|\varphi_{1}\|_{\infty} \|u_{1}\|_{\infty}
\|\varphi\|_{L^{2}(\partial \Omega)}^{2}.
\end{split}
\end{equation*}
Using again Young's inequality, we get
\begin{equation} \label{eqs22}
\frac{1}{2}\frac{\partial}{\partial t} \|\varphi\|_{2}^{2} +\left
\{ m- \frac{1}{2}\|\varphi_{1}\|_{\infty} \|u_{1}\|_{\infty} -c
\right \}\|\varphi\|_{L^{2}(\partial \Omega)}^{2}
\leq \int_{\Omega} |\nabla \varphi|^{2}dx + c\|\varphi\|_{2}^{2} +
c\|w\|_{L^{2}(\partial \Omega)}^{2}
\end{equation}
and, from Poincar\'e's inequality and the fact that the operator trace
from $H^{1}(\Omega)$ to the boundary space $L^{2}(\partial \Omega)$ is
linear and compact, we have from \eqref{eq:d} and \eqref{eqs22} that
\begin{multline*}
\frac{1}{2} \frac{\partial}{\partial t} \left ( \|w\|_{2}^{2}
+  \|\varphi\|_{2}^{2}\right ) + \left(
m-\frac{1}{2}\|u_{1}\|_{\infty} \|\varphi_{1}\|_{\infty}- c \right)
\|w\|^{2}_{L^{2}(\partial \Omega)}\\
+ \left
\{ m- \frac{1}{2}\|\varphi_{1}\|_{\infty} \|u_{1}\|_{\infty}
-c \right \}\|\varphi\|_{L^{2}(\partial \Omega)}^{2}
\leq \int_\Omega |\nabla \varphi|^2 dx + c \|w\|_{2}^{2}
+ c\|\varphi\|_{2}^{2} + c\|w\|^{2}_{L^{2}(\partial \Omega)}
+ c \|\varphi\|_{L^{2}(\partial \Omega)}^{2}.
\end{multline*}
Then,
\begin{multline*}
\frac{1}{2} \frac{\partial}{\partial t} \left ( \|w\|_{2}^{2}
+  \|\varphi\|_{2}^{2}\right )
+ \left(
m-\frac{1}{2}\|u_{1}\|_{\infty} \|\varphi_{1}\|_{\infty}- c \right)
\|w\|^{2}_{L^{2}(\partial \Omega)}\\
+ \left
\{ m- \frac{1}{2}\|\varphi_{1}\|_{\infty} \|u_{1}\|_{\infty}
-c \right \}\|\varphi\|_{L^{2}(\partial \Omega)}^{2}  \leq c \|w\|_{2}^{2}
+ c\|\varphi\|_{2}^{2}
\end{multline*}
and for $m$ sufficiently large one has
\begin{equation}
\label{equation2}
\frac{\partial}{\partial t} \left ( \|w\|_{2}^{2}
+  \|\varphi\|_{2}^{2}\right ) \leq c ( \|w\|_{2}^{2} +  \|\varphi\|_{2}^{2}).
\end{equation}
Gronwall's inequality leads to
$\|w\|_{2}^{2} + \|\varphi\|_{2}^{2} \leq 0$.
Then $u_{1}=u_{2}$ and $\varphi_{2}= \varphi_{1}$, which gives the
uniqueness of solutions to the optimality system and therefore the
uniqueness of the optimal control, since we have the existence of
an optimal control and corresponding state and adjoint, which
satisfy the optimality system. This completes the proof of
Theorem~\ref{thm41}.
\end{proof}

\begin{remark}
The uniqueness of the optimal control can be obtained from
$$
|\beta_{2}- \beta_{1}| \leq \frac{1}{2} |u_{2}\varphi_{2}-
u_{1}\varphi_{1}| \leq \frac{1}{2}( |u_{2}\varphi| + |w
\varphi_{1}|),
$$
since $\varphi = w=0$.
\end{remark}


\section{Numerical Example}
\label{sec:6}

We now give a numerical example for a particular problem.
We use a finite element approach based on the
Galerkin method to obtain approximate steady state solutions of
the optimality system in the one-dimensional case. The formulation
of the finite element method is based on a
variational formulation of the continuous optimality system. The
optimality system is discretized by finite differences. We then
obtain the following one-dimensional nonlocal thermistor problem:
\begin{equation*}
\frac{\partial u}{\partial t} - \Delta u =\frac{\lambda f(u)}{(
\int_{\Omega} f(u)\, dx)^{2}}, \, \quad 0< x < 1, \, \quad  t>0,
\end{equation*}
subject to the boundary and initial conditions
\begin{equation*}
\frac{\partial u}{\partial x} = -\beta u \, \quad \mbox { on }
\partial \Omega \times (0, T),
\end{equation*}
\begin{equation*}
u(x, 0)= 0, \, \quad 0\leq x \leq 1.
\end{equation*}

We divide the interval $\Omega = [0, 1]$ into $N$ equal finite
elements $0=x_{0} < x_{1} < \ldots < x_{N}=1$.
Let $(x_{j},x_{j+1})$ be a partition of $\Omega$ and
$x_{j+1}-x_{j}=h=\frac{1}{N}$ the step length.
By $S$ we denote a basis of  the
usual pyramid functions:
\begin{equation*}
v_{j}=
\begin{cases}
\frac{1}{h}x+(1-j)
& \mbox{ on } [x_{j-1}, x_{j}],\\
- \frac{1}{h}x+(1+j)
& \mbox{ on } [x_{j}, x_{j+1}],\\
0 & \mbox{ otherwise}.
\end{cases}
\end{equation*}
First, we write the problem in weak or
variational form. We multiply the parabolic equation by $v_{j}$
(for $j$ fixed), integrate over $(0, 1)$, and apply Green's
formula on the left-hand side, to obtain
\begin{equation*}
\int_{\Omega} \frac{\partial u}{\partial t} v_{j} \, dx +
\int_{\Omega}  \nabla u \nabla v_{j} \, dx - \int_{\partial
\Omega} \frac{\partial u}{\partial \nu} v_{j} \, ds =
\frac{\lambda \int_{\Omega}f(u)v_{j}dx}{( \int_{\Omega} f(u)\,
dx)^{2}} \, .
\end{equation*}
Using the boundary condition we get
\begin{equation}
\label{eq7}
\int_{\Omega} \frac{\partial u}{\partial t} v_{j} \,
dx + \int_{\Omega}  \nabla u \nabla v_{j} \, dx + \int_{\partial
\Omega}\beta  u v_{j} \, ds = \frac{\lambda
\int_{\Omega}f(u)v_{j}dx}{( \int_{\Omega} f(u)\, dx)^{2}}.
\end{equation}

We now turn our attention to the solution of system \eqref{eq7} by
discretization with respect to the time variable. We introduce a
time step $\tau$ and time levels $t_{n}= n\tau$, where $n$ is a
nonnegative integer, and denote by $u^{n}$ the approximation of
$u(t_{n})$ to be determined. We use the backward Euler--Galerkin
method, which is defined by replacing the time derivative in
\eqref{eq7} by a backward difference $\frac{u^{n+1}-u^{n}}{\tau}$.
So the approximations $u^{n+1}$ admit a unique representation,
$$
u^{n+1}= \sum_{i=-1}^{N}\alpha_{i}^{n+1} v_{i} \, ,
$$
where $\alpha_{i}^{n+1}$ are unknown real coefficients to be
determined. Thus,
\begin{equation*}
\int_{\Omega} \frac{u^{n+1}-u^{n}}{\tau} v_{j} \, dx +
\int_{\Omega}
 \nabla u^{n+1} \nabla v_{j} \, dx +\int_{\partial \Omega}
\beta u^{n+1} v_{j} \, ds =  \frac{\lambda \int_{\Omega} f(u^{n})
 v_{j} dx }{( \int_{\Omega} f(u^{n})\, dx)^{2}} .
\end{equation*}
The scheme may be stated in terms of the functions $v_{i}$:
find the coefficients $\alpha_{i}^{n+1}$
in $u^{n+1}= \sum_{i=-1}^{N}\alpha_{i}^{n+1} v_{i}$ such that
\begin{multline}
\label{eq14b}
\sum_{i=-1}^{N} \alpha_{i}^{n+1} \int_{\Omega}
v_{i}v_{j}\, dx + \tau \sum_{i=-1}^{N} \alpha_{i}^{n+1}
\int_{\Omega} \nabla v_{i}
\nabla v_{j}\, dx +\tau \int_{\partial \Omega}
\beta u^{n+1}  v_{j} \, ds \\
= \sum_{i=-1}^{N} \alpha_{i}^{n} \int_{\Omega} v_{i}v_{j}\, dx
+ \tau \frac{\lambda \int_{\Omega} f(u^{n})
v_{j} dx }{\left( \int_{\Omega} f(u^{n})\, dx\right)^{2}}.
\end{multline}
In matrix notation, this may be expressed as
$\left (A+ \tau B \right) \alpha^{n+1}= g^{n}= g(n\tau)$,
where
$$
A= (a_{ij}) \mbox{ with element }a_{ij}
= \int_{\Omega} v_{i}v_{j}\, dx \, ,
$$
$$
B= (b_{ij}) \mbox{ with  } b_{ij}= \int_{\Omega}  \nabla v_{i}
\nabla v_{j}\, dx \, ,
$$
and
$\alpha^{n+1}$ is the vector of unknowns
$(\alpha_{i}^{n+1})_{i=-1}^{N}$.
Since the matrix $A$ and $B$ are Gram matrices, in particular they
are positive definite and invertible. Thus, the above system of
ordinary differential equations has obviously a unique solution.
We solve the system \eqref{eq14b} for each time level. Estimating
each term of \eqref{eq14b} separately, we have:
\begin{equation*}
\begin{split}
\sum_{i=-1}^{N} & \alpha_{i}^{n+1} \int_{\Omega} v_{i}v_{j}\, dx\\
&=\sum_{i=-1}^{N} \alpha_{i}^{n+1} \int_{0}^{1}
v_{i}v_{j}\, dx \\
&= \alpha_{j-1}^{n+1} \int_{x_{j-1}}^{x_{j}} v_{j-1}v_{j}\, dx
+\alpha_{j}^{n+1} \int_{x_{j-1}}^{x_{j+1}} v_{j}^{2}\, dx
+ \alpha_{j+1}^{n+1} \int_{x_{j}}^{x_{j+1}} v_{j}v_{j+1}\, dx\\
&= \alpha_{j-1}^{n+1} \int_{x_{j-1}}^{x_{j}} v_{j-1}v_{j}\, dx
+\alpha_{j}^{n+1}\left( \int_{x_{j-1}}^{x_{j}} v_{j}^{2}\, dx
+\int_{x_{j}}^{x_{j+1}} v_{j}^{2}\, dx  \right)
+ \alpha_{j+1}^{n+1} \int_{x_{j}}^{x_{j+1}} v_{j}v_{j+1}\, dx.
\end{split}
\end{equation*}
Using the expression of $v_{j-1}, v_{j}$ and $v_{j+1}$, we obtain
\begin{equation}
\label{eq15}
\sum_{i=-1}^{N} \alpha_{i}^{n+1} \int_{\Omega} v_{i}v_{j}\, dx=
\frac{h}{6}\alpha_{j-1}^{n+1}+\frac{2h}{3}\alpha_{j}^{n+1}+
\frac{h}{6}\alpha_{j+1}^{n+1}.
\end{equation}
Similarly, we have
\begin{equation}
\begin{split}
\sum_{i=-1}^{N} & \alpha_{i}^{n+1} \int_{\Omega} \nabla v_{i}
\nabla v_{j}\, dx\\
&= \sum_{i=-1}^{N} \alpha_{i}^{n+1} \int_{\Omega}
\frac{\partial v_{i}}{\partial x}
\frac{\partial v_{j}}{\partial x} \, dx \\
& = \alpha_{j-1}^{n+1} \int_{x_{j-1}}^{x_{j}}
\frac{\partial v_{j-1}}{\partial x}
\frac{\partial v_{j}}{\partial x} \, dx
+ \alpha_{j}^{n+1} \int_{x_{j-1}}^{x_{j+1}}\left(
\frac{\partial v_{j}}{\partial x}\right)^{2} \, dx
+ \alpha_{j+1}^{n+1} \int_{x_{j}}^{x_{j+1}} \frac{\partial v_{j}}{\partial x}
\frac{\partial v_{j+1}}{\partial x} \, dx,\\
& = - \frac{\alpha_{j-1}^{n+1}}{h^{2}} \int_{x_{j-1}}^{x_{j}} dx
+ \frac{\alpha_{j}^{n+1}}{h^{2}} \int_{x_{j-1}}^{x_{j+1}} dx
- \frac{\alpha_{j+1}^{n+1}}{h^{2}} \int_{x_{j}}^{x_{j+1}} dx \\
& = - \frac{1}{h}\alpha_{j-1}^{n+1} + \frac{2}{h}
\alpha_{j}^{n+1}- \frac{1}{h}\alpha_{j+1}^{n+1}.
\end{split}
\end{equation}
On the other hand,
\begin{equation}
\int_{\Omega} u^{n}v_{j}= \sum_{i=-1}^{N}
\alpha_{i}^{n}\int_{\Omega} v_{i} v_{j} \, dx=
\frac{h}{6}\alpha_{j-1}^{n}+ \frac{2h}{3} \alpha_{j}^{n}+
\frac{h}{6}\alpha_{j+1}^{n}
\end{equation}
and
\begin{equation}
\begin{split}
\beta \int_{\partial \Omega = \{ 0, 1\}} u^{n+1} v_{j}
&\simeq
\frac{1}{2} \left( \beta u^{n+1}(1) v_{j}(1) + \beta u^{n+1}(0) v_{j}(0) \right)\\
& = \frac{1}{2} \left( \beta \alpha_{N}^{n+1} v_{j}(1)
+ \beta \alpha_{0}^{n+1} v_{j}(0) \right) \\
&= \begin{cases}
\frac{1}{2} \beta \alpha_{0}^{n+1}
& \mbox{ if } j= 0,\\
0 &  \mbox{ if } j=1 \ldots N-2,\\
0 & \mbox{ if } j=N-1.
\end{cases}
\end{split}
\end{equation}
Furthermore,
\begin{equation}
\label{eq35}
\frac{\lambda \int_{\Omega} f(u^{n}) v_{j} dx}{\left(
\int_{\Omega} f(u^{n})\, dx\right)^{2}} \simeq
\begin{cases}
\frac{2\lambda f(\alpha_{0}^{n})}{(
f(\alpha_{0}^{n}+ f(\alpha_{N}^{n}))^{2}}
& \mbox{ if } j= 0,\\
0 &  \mbox{ if } j=1, \ldots N-2,\\
0
& \mbox{ if } j=N-1. \\
\end{cases}
\end{equation}
Using the boundary conditions, we have
\begin{equation*}
\begin{split}
\alpha_{-1}^{n+1}&= \alpha_{1}^{n+1}+ \left(
h\beta+1 \right) \alpha_{0}^{n+1},\\
\alpha_{-1}^{n}&= \alpha_{1}^{n}+ \left(
h\beta + 1 \right) \alpha_{0}^{n},\\
\alpha_{N}^{n+1}&= \frac{1}{\beta h +
1} \alpha_{N-1}^{n+1},\\
\alpha_{N}^{n}&= \frac{1}{\beta h + 1} \alpha_{N-1}^{n}.
\end{split}
\end{equation*}
From the initial condition we get
$\alpha_{0}^{0}= \alpha_{N}^{0}= 0$.
Setting
$$
a= \frac{h}{6} -\frac{\tau}{h},
\quad b= \frac{2h}{3} +\frac{2 \tau}{h},
$$
and using together \eqref{eq14b}--\eqref{eq35}, we then get the
following  system of $N-1$ linear algebraic equations:\\
for $j=0$,
\begin{equation*}
\left(a(1 + h \beta)+ b +\frac{\tau \beta}{2} \right)
\alpha_{0}^{n+1} + 2a \alpha_{1}^{n+1} =  \frac{h}{6}\left(5 + h
\beta \right)\alpha_{0}^{n} + \frac{h}{3}
\alpha_{1}^{n} + \frac{2\lambda  \tau f(\alpha_{0}^{n})}{( f(\alpha_{0}^{n})
+f(\alpha_{N}^{n}))^{2}},
\end{equation*}
for $j= 1, \ldots, N-2$,
\begin{equation*}
a \alpha_{j-1}^{n+1} + b \alpha_{j}^{n+1} + a \alpha_{j+1}^{n+1}
= \frac{h}{6} \alpha_{j-1}^{n} + \frac{2h}{3}\alpha_{j}^{n}
+\frac{h}{6} \alpha_{j+1}^{n} \, ,
\end{equation*}
for $j= N-1$,
\begin{equation*}
a \alpha_{N-2}^{n+1} + \left( b+\frac{a}{1+ \beta h}\right )
\alpha_{N-1}^{n+1} = \frac{h}{6}\alpha_{N-2}^{n} + \frac{2h}{3}
\left ( 1+\frac{1}{4(1+ \beta h)} \right) \alpha_{N-1}^{n}.
\end{equation*}
Similarly,
$$
\varphi^{n+1}= \sum_{i=-1}^{N}\mu_{i}^{n+1} v_{i},
$$
where $\mu_{i}^{n+1}$ are unknown real coefficients to be
determined. The discretization of the boundary conditions with
respect to $\varphi$ looks as follows:
\begin{equation*}
\begin{split}
\mu_{-1}^{n+1} &= \mu_{1}^{n+1} + ( h \beta + 1) \mu_{0}^{n+1},\\
\mu_{-1}^{n} &= \mu_{1}^{n} + ( h \beta + 1) \mu_{0}^{n},\\
\mu_{N}^{n+1} &= \frac{1}{1+ \beta h} \mu_{N-1}^{n+1},\\
\mu_{N}^{n} &= \frac{1}{1+ \beta h} \mu_{N-1}^{n}.
\end{split}
\end{equation*}
If we set
$$
c= -\frac{h}{6} -\frac{\tau}{h},
\qquad d =- \frac{2h}{3} +\frac{2 \tau}{h},
$$
then the remaining discrete equations, approximating the
optimality system, are as follows:\\
for $j=0$,
\begin{multline*}
\left ( c(1+h \beta) + d+ \frac{\tau \beta} {2}- \frac{2\lambda
\tau \beta f'(\alpha_{0}^{n}) }{\left( f(\alpha_{N}^{n})+
f(\alpha_{0}^{n})\right)^{2}} \right) \mu_{0}^{n+1} + 2 c
\mu_{1}^{n+1}\\
= -\frac{h}{6}(5+ h \beta) \mu_{0}^{n} -\frac{h}{3}
\mu_{1}^{n} +  \tau h + \frac{2 \lambda \tau (\mu_{0}^{1}+
\mu_{1}^{N}) f(\alpha_{0}^{n})}{\left( f(\alpha_{N}^{n})+
f(\alpha_{0}^{n}) \right)^{3}},
\end{multline*}
for $j= 1, \ldots N-2$,
$$
c \mu_{j-1}^{n+1} +d \mu_{j}^{n+1}+ c \mu_{j+1}^{n+1} = \tau h -
\frac{h}{6} \mu_{j-1}^{n}- \frac{2h}{3} \mu_{j}^{n} - \frac{h}{6}
\mu_{j+1}^{n},
$$
for $j=N-1$,
$$
c \mu_{N-2}^{n+1} + \left ( d+\frac{c}{1+ \beta h} \right)\mu_{N-1}^{n+1}
=\tau h - \frac{h}{6} \mu_{N-2}^{n}- \frac{2h}{3} \left(1
+ \frac{1}{4(1+\beta h)} \right)\mu_{N-1}^{n}.
$$

Finally, we have the discretization of $\beta$ as follows:
\begin{equation}
\label{eq:beta1} \beta^{n+1} = min \left ( max \left ( m,
-\frac{u^{n+1} \varphi^{n+1}}{2}
  \right ), M \right).
\end{equation}


The numerical experiments are in agreement
with the results of \citep{MR2348482}:
we obtain stable steady-state (see Figure~\ref{fig1}).
\begin{figure}
\begin{minipage}[t]{0.45\textwidth}
\begin{center}
\includegraphics[scale=0.4]{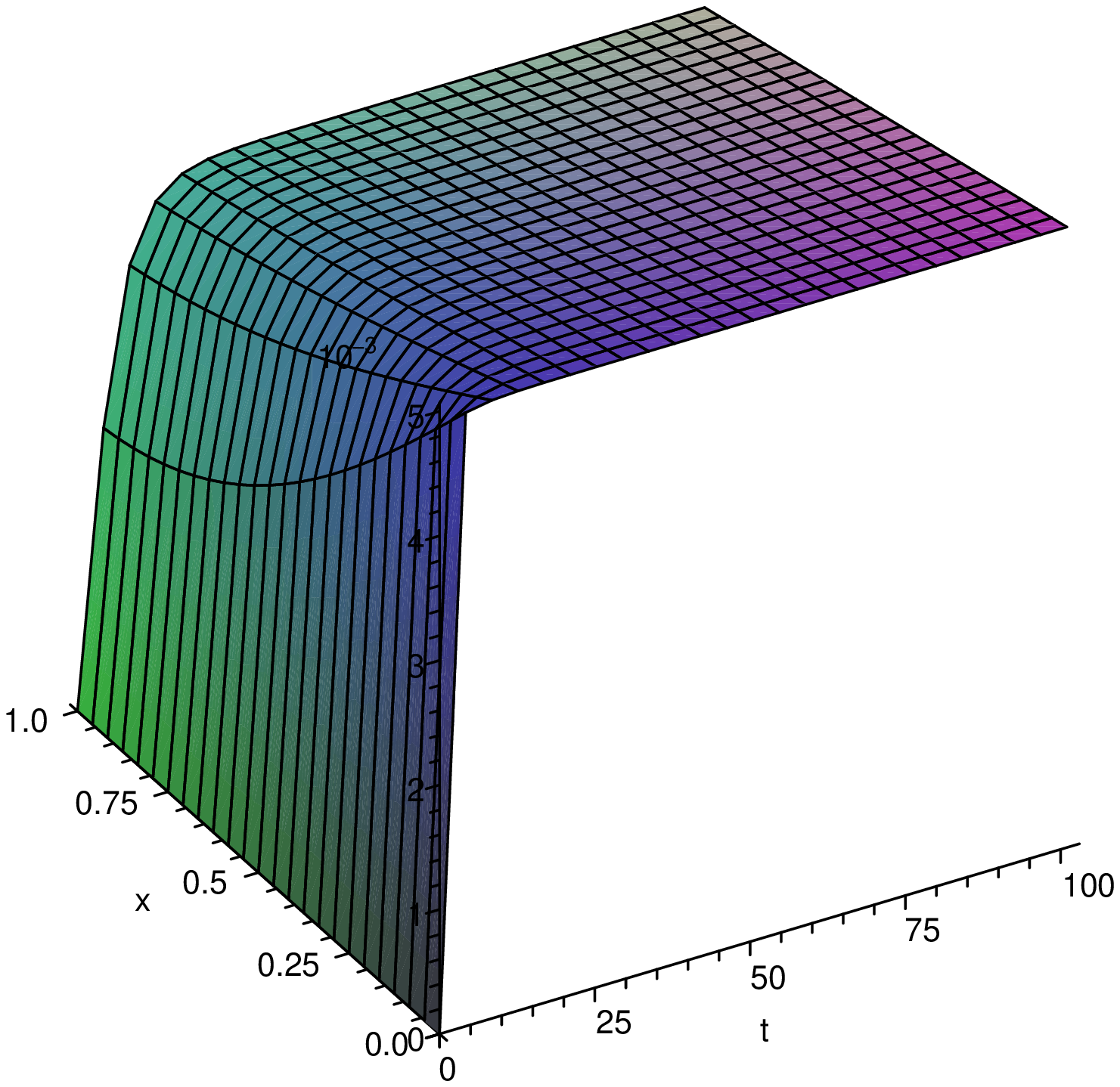}
\caption{\label{fig1} The evolution of temperature $u$}
\end{center}
\end{minipage}
\hfill
\begin{minipage}[t]{0.45\textwidth}
\begin{center}
\includegraphics[scale=0.4]{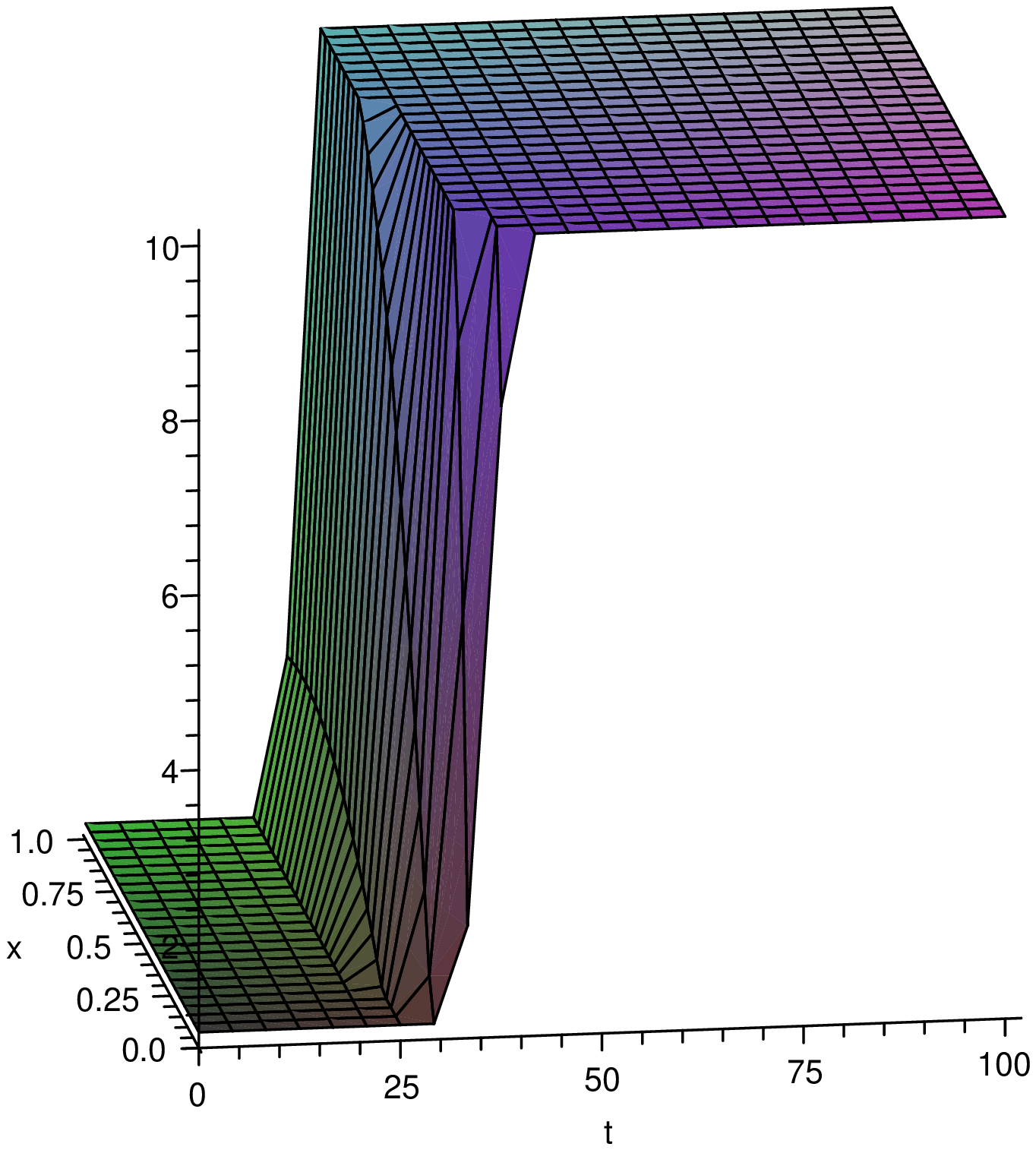}
\caption{\label{fig3} The control $\beta$}
\end{center}
\end{minipage}
\end{figure}
With an initial guess for the value of the control,
the consecutive values of $\beta$ converge to the lower bound
when time is small and to the upper bound when $t$ is big
(see Figure~\ref{fig3}).


\section*{Acknowledgements}

This work was supported by FEDER funds through
COMPETE --- Operational Programme Factors of Competitiveness
(``Programa Operacional Factores de Competitividade'')
and by Portuguese funds through the
{\it Center for Research and Development
in Mathematics and Applications} (University of Aveiro)
and the Portuguese Foundation for Science and Technology
(``FCT --- Funda\c{c}\~{a}o para a Ci\^{e}ncia e a Tecnologia''),
within project PEst-C/MAT/UI4106/2011
with COMPETE number FCOMP-01-0124-FEDER-022690.
The authors were also supported by the project
\emph{New Explorations in Control Theory Through Advanced Research} (NECTAR)
cofinanced by FCT, Portugal, and the \emph{Centre National de la Recherche
Scientifique et Technique}, Morocco.




\begin{thebibliography}{99}

\bibitem[Antontsev and Chipot(1994)]{ac}
S. N. Antontsev\ and\ M. Chipot,
The thermistor problem: existence, smoothness uniqueness, blowup,
SIAM J. Math. Anal. {\bf 25} (1994), no.~4, 1128--1156.

\bibitem[Arantes and Mu\~{n}oz Rivera(2010)]{MR2747291}
S. F. Arantes\ and\ J. E. Mu\~{n}oz Rivera,
Optimal control theory for ambient pollution,
Internat. J. Control {\bf 83} (2010), no.~11, 2261--2275.

\bibitem[Cimatti(2007)]{gc}
G. Cimatti,
Optimal control for the thermistor problem with a current limiting device,
IMA J. Math. Control Inform. {\bf 24} (2007), no.~3, 339--345.

\bibitem[Cimatti(2011)]{MR2805614}
G. Cimatti,
Remarks on the existence, uniqueness and semi-explicit solvability
of systems of autonomous partial differential equations in divergence
form with constant boundary conditions,
Proc. Roy. Soc. Edinburgh Sect. A {\bf 141} (2011), no.~3, 481--495.

\bibitem[El Hachimi and Sidi Ammi(2005)]{sidi}
A. El Hachimi\ and\ M. R. Sidi Ammi,
Existence of global solution for a nonlocal parabolic problem,
Electron. J. Qual. Theory Differ. Equ. {\bf 2005} (2005), no.~1, 9~pp.

\bibitem[Fowler \etal(1992)]{ffh}
A. C. Fowler, I. Frigaard\ and\ S. D. Howison,
Temperature surges in current-limiting circuit devices,
SIAM J. Appl. Math. {\bf 52} (1992), no.~4, 998--1011.

\bibitem[H\"omberg \etal(2009/10)]{MR2599927}
D. H\"omberg, C. Meyer, J. Rehberg\ and\ W. Ring,
Optimal control for the thermistor problem,
SIAM J. Control Optim. {\bf 48} (2009/10), no.~5, 3449--3481.

\bibitem[Hrynkiv(2009)]{vol}
V. Hrynkiv,
Optimal boundary control for a time dependent thermistor problem,
Electron. J. Differential Equations {\bf 2009} (2009), no.~83, 22~pp.

\bibitem[Hrynkiv \etal(2008)]{vsv}
V. Hrynkiv, S. Lenhart\ and\ V. Protopopescu,
Optimal control of a convective boundary condition in a thermistor problem,
SIAM J. Control Optim. {\bf 47} (2008), no.~1, 20--39.

\bibitem[Kwok(1995)]{kw}
K. Kwok,
{\it Complete guide to semiconductor devices},
McGraw-Hill, New york, 1995.

\bibitem[Lacey(1995)]{lac2}
A. A. Lacey,
Thermal runaway in a non-local problem modelling Ohmic heating. II.
General proof of blow-up and asymptotics of runaway,
European J. Appl. Math. {\bf 6} (1995), no.~3, 201--224.

\bibitem[Ladyzenskaya \etal(1971)]{lad}
O. A. Ladyzenskaya, V. A. Solonikov\ and\ N. N. Uralceva,
Linear and quasilinear equations of parabolic type,
Trans. Math. Monographs, Vol. 23, Amer. Math. Soc., Providence,
RI, 1971.

\bibitem[Lee and Shilkin(2005)]{LeeShilkin}
H.-C. Lee\ and\ T. Shilkin,
Analysis of optimal control problems for the two-dimensional thermistor system,
SIAM J. Control Optim. {\bf 44} (2005), no.~1, 268--282.

\bibitem[Lions(1969)]{jll}
J.-L. Lions,
{\it Quelques m\'ethodes de r\'esolution des probl\`emes aux limites non lin\'eaires},
Dunod, 1969.

\bibitem[Lions(1971)]{Lions71}
J.-L. Lions,
{\it Optimal control of systems governed by partial differential equations},
Translated from the French by S. K. Mitter.
Die Grundlehren der mathematischen Wissenschaften, Springer, New York, 1971.

\bibitem[Maclen(1979)]{mac}
E. D. Maclen,
{\it Thermistors},
Electrochemical publication, Glasgow, 1979.

\bibitem[Nikolopoulos and Zouraris(2008)]{MR2454231}
C. V. Nikolopoulos\ and\ G. E. Zouraris,
Numerical solution of a non-local elliptic problem modeling
a thermistor with a finite element and a finite volume method,
in {\it Progress in industrial mathematics at ECMI 2006},
827--832, Math. Ind., 12 Springer, Berlin, 2008.

\bibitem[Rodrigues(1992)]{jr}
J.-F. Rodrigues,
A nonlinear parabolic system arising in thermomechanics and in thermomagnetism,
Math. Models Methods Appl. Sci. {\bf 2} (1992), no.~3, 271--281.

\bibitem[Shi \etal(1993)]{psx}
P. Shi, M. Shillor\ and\ X. Xu,
Existence of a solution to the Stefan problem with Joule's heating,
J. Differential Equations {\bf 105} (1993), no.~2, 239--263.

\bibitem[Sidi Ammi(2007)]{MR2405382}
M. R. Sidi Ammi,
Optimal control for a nonlocal parabolic problem resulting from thermistor system,
Int. J. Ecol. Econ. Stat. {\bf 9} (2007), no.~F07, 116--122.

\bibitem[Sidi Ammi(2010)]{ajse}
M. R. Sidi Ammi,
Application of the $L^{\infty}$-energy method to the non local thermistor problem,
Arab. J. Sci. Eng. Sect. A Sci. {\bf 35} (2010), no.~1D, 1--12.

\bibitem[Sidi Ammi and Torres(2007)]{sidel}
M. R. Sidi Ammi\ and\ D. F. M. Torres,
Necessary optimality conditions for a dead oil isotherm optimal control problem,
J. Optim. Theory Appl. {\bf 135} (2007), no.~1, 135--143.
{\tt arXiv:math/0612376}

\bibitem[Sidi Ammi and Torres(2008)]{MR2348482}
M. R. Sidi Ammi\ and\ D. F. M. Torres,
Numerical approximation of the thermistor problem,
Int. J. Math. Stat. {\bf 2} (2008), no.~S08, 106--114.
{\tt arXiv:0711.0597}

\bibitem[Tr\"oltzsch(2010)]{MR2583281}
F. Tr\"oltzsch,
{\it Optimal control of partial differential equations},
translated from the 2005 German original by J\"urgen Sprekels,
Graduate Studies in Mathematics, 112,
Amer. Math. Soc., Providence, RI, 2010.

\bibitem[Tzanetis(2002)]{tza}
D. E. Tzanetis,
Blow-up of radially symmetric solutions of a non-local problem modelling Ohmic heating,
Electron. J. Differential Equations {\bf 2002} (2002), no.~11, 26~pp.

\bibitem[Zeidler(1988)]{zeidler}
E. Zeidler,
{\it Nonlinear functional analysis and its applications. IV},
Translated from the German and with a preface by Juergen Quandt,
Springer, New York, 1988.

\bibitem[Zhou and Liu(2010)]{MR2724193}
J. Zhou\ and\ B. Liu,
Optimal control problem for stochastic evolution equations in Hilbert spaces,
Internat. J. Control {\bf 83} (2010), no.~9, 1771--1784.

\bibitem[Zhou and Westbrook(1997)]{zw}
S. Zhou\ and\ D. R. Westbrook,
Numerical solutions of the thermistor equations,
J. Comput. Appl. Math. {\bf 79} (1997), no.~1, 101--118.

\end{thebibliography}
\end{document}